\numberwithin{equation}{section} 
\numberwithin{figure}{section} 
\theoremstyle{plain}
\newtheorem{theorem}{Theorem}[section]
\newtheorem{corollary}[theorem]{Corollary}
\newtheorem{definition}[theorem]{Definition}
\newtheorem{lemma}[theorem]{Lemma}
\newtheorem{notation}[theorem]{Notation}
\newtheorem{problem}[theorem]{Problem}
\newtheorem{proposition}[theorem]{Proposition}
\newtheorem{remark}[theorem]{Remark}
\newcommand{\<}{\langle}
\newcommand{\id}{\operatorname{id}}
\renewcommand{\>}{\rangle}
\newcommand{\T}{\mathbb{T}}
\newcommand{\D}{\mathbb{D}}
\newcommand{\Z}{\mathbb{Z}}
\newcommand{\Lb}{\mathcal{L}}
\newcommand{\LiL}{L^{\infty}(\mathbb{T})_{\mathcal{L}}}
\begin{document}

\title{Composition Operators and Endomorphisms}

\thanks{DC and SS were partially supported by the University of Iowa Department
of Mathematics NSF VIGRE grant DMS-0602242.}

\author{Dennis Courtney}
\author{Paul S. Muhly}
\author{Samuel W. Schmidt}

\address{Department of Mathematics\\
 University of Iowa\\
 Iowa City, IA 52242}

\email{dennis-courtney@uiowa.edu}
\email{paul-muhly@uiowa.edu}
\email{samuel-schmidt@uiowa.edu}
\begin{abstract}
If $b$ is an inner function, then composition with $b$ induces an endomorphism,
$\beta$, of $L^\infty(\T)$ that leaves $H^\infty(\T)$ invariant. We
investigate the structure of the endomorphisms of $B(L^2(\T))$  and $B(H^2(\T))$
that implement $\beta$ through the representations of $L^\infty(\T)$ and
$H^\infty(\T)$ in terms of multiplication operators on  $L^2(\T)$  and
$H^2(\T)$. Our analysis, which is based on work of R. Rochberg and J. McDonald,
will wind its way through the theory of composition
operators on spaces of analytic functions to recent work on Cuntz families of
isometries and Hilbert $C^*$-modules.
\end{abstract}
\maketitle

\section{Introduction}

Our objective in this note is to link the venerable theory of composition
operators on spaces of analytic functions to the representation theory
of $C^{*}$-algebras. The theory of composition operators is full
of equations that involve operators that intertwine various types
of representations. In certain situations the equations can be recast
in terms of {}``covariance equations'' that are familiar from the
theory of $C^{*}$-algebras, their endomorphisms and their representations; doing this yields both new theorems and new understanding of known results.

We are inspired in particular by papers by Richard Rochberg \cite{rR73}
and John McDonald \cite{jMcD03}. In \cite[Theorem 1]{rR73}, Rochberg
performs calculations which may be seen from a more contemporary
perspective as identifying certain Cuntz families of isometries and
Hilbert $C^{*}$-modules at the heart of what he is studying. In
\cite{jMcD03}, McDonald built upon Rochberg's work and proved, among
other things, that the canonical transfer operator associated to composition
with a finite Blaschke product leaves the Hardy space $H^{2}(\T)$
invariant. This note is in large part the result of trying to recast
\cite[Theorem 1]{rR73} in the setting of $C^{*}$-algebras and endomorphisms
using McDonald's observation on transfer operators \cite[Lemma 2]{jMcD03}.

The classical Lebesgue and Hardy spaces on the unit circle $\mathbb{T}$
will be denoted by $L^{p}(\mathbb{T})$ and $H^{p}(\mathbb{T})$
respectively. Normalized Lebesgue measure on $\T$ will be denoted $m$. The orthogonal projection from $L^{2}(\mathbb{T})$
onto $H^{2}(\mathbb{T})$ will be denoted by $P$. The usual exponential
orthonormal basis for $L^{2}(\mathbb{T})$ will be denoted by $\{e_{n}\}_{n\in\mathbb{Z}}$,
i.e., $e_{n}(z):=z^{n}$.  We write $(\cdot, \cdot)$ for the inner product of $L^2(\mathbb{T})$. The multiplication operator on $L^{2}(\mathbb{T})$
determined by a function $\varphi\in L^{\infty}(\mathbb{T})$ will
be denoted $\pi(\varphi)$ and the Toeplitz operator on $H^{2}(\mathbb{T})$
determined by $\varphi$ will be denoted by $\tau(\varphi)$, i.e.,
$\tau(\varphi)$ is the restriction of $P\pi(\varphi)P$ to $H^{2}(\mathbb{T})$.

Our use of the notation $\pi$ and $\tau$ is nonstandard. More commonly,
one writes $M_{f}$ for the multiplication operator determined by
$f$ and $T_{f}$ for the Toeplitz operator determined
by $f$, but for the purposes of this note, we have
found the standard notation to be a bit awkward.  In any case, the map $\pi$ is a $C^{*}$-representation of $L^{\infty}(\mathbb{T})$ on $L^{2}(\mathbb{T})$ that is continuous with respect to the weak-$*$
topology on $L^{\infty}(\mathbb{T})$ and the weak operator topology
on $B(L^{2}(\mathbb{T}))$, and $\tau$ is a (completely) positive
linear map from $L^{\infty}(\mathbb{T})$ to $B(H^{2}(\mathbb{T}))$
with similar continuity properties.

We fix throughout an inner function $b$ which at times will further
be assumed to be a finite Blaschke product. Composition with
$b$, that is, the map $\varphi \mapsto \varphi \circ b$, is known to induce a $*$-endomorphism $\beta$ of $L^{\infty}(\mathbb{T})$ that is continuous with respect to the weak-$*$ topology on $L^{\infty}(\T)$.
When $b$ is a finite Blaschke product this statement is fairly elementary; if $b$ is an arbitrary
inner function, it is somewhat more substantial.  We give an operator-theoretic proof in Corollary \ref{Cor:Well defined beta}. When $\beta$ leaves a subspace of $L^{\infty}(\mathbb{T})$ invariant, we will continue
to use the notation $\beta$ for its restriction to the subspace.  The central focus of our analysis is \begin{problem}\label{Problem: Central problem}
Describe all $*$-endomorphisms $\alpha$ of $B(L^{2}(\mathbb{T}))$
such that \begin{equation}
\alpha\circ\pi=\pi\circ\beta\label{eq:cov1}\end{equation}
 and describe all $*$-endomorphisms $\alpha_{+}$ of $B(H^{2}(\mathbb{T}))$
such that \begin{equation}
\alpha_{+}\circ\tau=\tau\circ\beta. \label{eq:cov2}\end{equation}
 \end{problem}

If an endomorphism $\alpha$ of $B(L^{2}(\T))$ satisfies
\eqref{eq:cov1}, the pair $(\pi,\alpha)$ is called a \emph{covariant
representation} of the pair $(L^{\infty}(\T),\beta)$.  As
$\pi$ will be fixed throughout this note, the first part of our problem
is thus to identify all endomorphisms $\alpha$ of $B(L^{2}(\T))$ that
yield a covariant representation $(\pi,\alpha)$ of $(L^{\infty}(\T),\beta)$.
Equation~\eqref{eq:cov2} is a hybrid version of \eqref{eq:cov1},
but as we shall see, it may be interpreted as describing certain covariant
representations of the Toeplitz algebra, i.e., of the $C^{*}$-algebra
$\mathfrak{T}$ generated by all the Toeplitz operators $\tau(\varphi)$,
$\varphi\in L^{\infty}(\T)$. It is not clear \emph{a priori} that
\emph{any} endomorphisms satisfying \eqref{eq:cov1} or \eqref{eq:cov2}
exist. They do, however, as we shall show in Theorem \ref{Thm: Main1},
where Rochberg's work plays a central role. Then, in Corollary
\ref{Thm: Solution2}, we show how Rochberg's analysis yields a complete description of all solutions to \eqref{eq:cov2}. Identifying all solutions to \eqref{eq:cov1} is more complicated, and it is here that we must assume
that $b$ is a finite Blaschke product.  The set of solutions to \eqref{eq:cov1} is described in Theorem~\ref{Thm:Solution 1} under this restriction. 

In solving Problem~\ref{Problem: Central problem}
we obtain many new proofs of known results. We do not take any position
on the matter of which proofs are simpler or more elementary. Our
more modest goal is  to separate what can be derived through elementary
Hilbert space considerations from what requires more specific function-theoretic
analysis. In this respect, we were inspired by the work of Helson
and Lowdenslager \cite{HL61}, Halmos and others who cast Hardy space
theory in Hilbert space terms and, in particular, showed that Beurling's
theorem about invariant subspaces of the shift operator can be proved
with elementary Hilbert space methods.  Indeed, as we shall see, our
main Theorem \ref{Thm: Main1} is a straightforward corollary
of Beurling's theorem and requires no more technology than Helson
and Lowdenslager's approach to that result.  This paper, therefore,
has something of a didactic component. When we
reprove or reinterpret a known result, we call attention to it and
give references to alternative approaches.

\section{Preliminaries and Background}\label{sec:preliminaries}

It is well known that when $H$ is a Hilbert space, $B(H)$ is the
dual space of the space of trace class operators on $H$. The weak-$*$
topology on $B(H)$ is often called the \emph{ultraweak} topology.
We adopt that terminology here. The ultraweak topology is different
from the weak operator topology, but the two coincide on bounded subsets
of $B(H)$. It follows that our representation $\pi$ is continuous
with respect to the weak-$*$ topology on $L^{\infty}(\T)$ and either
the weak operator topology or the ultraweak topology on $B(L^{2}(\T))$.

As indicated earlier, it is straightforward to see that composition
with a \emph{finite} Blaschke product induces an endomorphism of $L^{\infty}(\T)$.
It is less clear that composition with an arbitrary
inner function has this property.  There
are two reasons for this. The first is that the boundary values of
a general inner function $b$ are only defined on a set $F\subseteq\T$
with $m(\T\backslash F)=0$. The second is that an element of $L^{\infty}(\T)$
is an \emph{equivalence class} of measurable functions containing
a bounded representative, where two functions are equivalent
if and only if they differ on a null set. Thus we want to know that
if we extend $b$ arbitrarily on $\T\backslash F$, mapping to $\T$,
and if $\varphi$ and $\psi$ differ at most on a null set, then so
do $\varphi\circ b$ and $\psi\circ b$. A little reflection reveals
that for this to happen, it is necessary and sufficient that the following
assertion be true: 
\begin{quote}
If $b$ is an inner function whose domain on $\T$ is the measurable
set $F$, then for every null set $E$ of $\T$, $b^{-1}(E)$ is a
null set of $F$. 
\end{quote}
This fact is well known, but exactly who deserves credit for
first proving it is unclear to us. The short note by Kametani and
Ugaheri \cite{KU42} proves it in the case that $b(0)=0$.  This implies the general case, as Lebesgue null
sets of $\T$ are preserved by conformal maps of the disc, and every
inner function $b$ can be written $b=\alpha\circ b_{1}$ with 
$b_{1}$ an inner function fixing the origin and $\alpha$ a conformal
map of the disc.  In Corollary~\ref{Cor:Well defined beta}, we will give a proof of this assertion from the abstract Hilbert space perspective that we are promoting.  We will need the following lemma. To emphasize the distinction between
a measurable function $f$ and its equivalence class modulo the relation
of being equal almost everywhere, we \emph{temporarily} write $[f]$
for the latter.

\begin{lemma}\label{lem: Null set} Let $\theta$ be a Lebesgue measurable
function from $\T$ to $\T$. Suppose $\Theta$ is defined on trigonometric
polynomials $p$ by the formula $\Theta(p)=p\circ\theta$. Then 
\begin{enumerate}
\item\label{nullsetone} $\Theta$ has a unique extension to a $*$-homomorphism from $C(\T)$
into $L^{\infty}(\T)$, and it is given by the formula $\Theta(\varphi)=[\varphi\circ\theta]$,
$\varphi\in C(\T)$. 
\item\label{nullsettwo} If $\Theta$ is continuous with respect to the weak-$*$ topology
of $L^{\infty}(\T)$ restricted to $C(\T)$ and the weak-$*$topology
on $L^{\infty}(\T)$, then for each Lebesgue null set $E$ of $\T$,
$m(\theta^{-1}(E))=0$, and thus $\Theta$ extends uniquely to
a $*$-endomorphism of $L^{\infty}(\T)$ satisfying $\Theta([\varphi])=[\varphi\circ\theta]$ for all
$[\varphi]\in L^{\infty}(\T)$.  The map $\Theta$ is completely determined by $[\theta]$.
\end{enumerate}
\end{lemma}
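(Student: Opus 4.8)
The plan is to treat the two parts in turn, handling the purely analytic content of part~(1) first and then using weak-$*$ continuity in part~(2) to pull null sets back to null sets. For part~(1), the key observation is that because $\theta$ takes values in $\T$, every trigonometric polynomial $p$ satisfies $\sup_{w\in\T}|p(\theta(w))|\le\sup_{z\in\T}|p(z)|=\|p\|_{C(\T)}$, so that $\|[p\circ\theta]\|_{L^\infty(\T)}\le\|p\|_{C(\T)}$. Thus $p\mapsto[p\circ\theta]$ is a contractive, unital, multiplicative, $*$-preserving linear map from the trigonometric polynomials into $L^\infty(\T)$. Since the trigonometric polynomials are dense in $C(\T)$, it extends uniquely by continuity to a bounded linear map on $C(\T)$, and because multiplication and the involution are norm-continuous the extension is again a unital $*$-homomorphism. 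To identify the extension as $\varphi\mapsto[\varphi\circ\theta]$, I would take $p_n\to\varphi$ uniformly and note $\|[p_n\circ\theta]-[\varphi\circ\theta]\|_{L^\infty(\T)}\le\|p_n-\varphi\|_{C(\T)}\to 0$ (here $\varphi\circ\theta$ is measurable since $\varphi$ is continuous, hence Borel, and $\theta$ is measurable). Uniqueness is automatic, as two $*$-homomorphisms agreeing on the dense subalgebra of trigonometric polynomials coincide.

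For part~(2), the heart of the matter is to deduce from weak-$*$ continuity that the pushforward measure $\mu:=m\circ\theta^{-1}$ is absolutely continuous with respect to $m$. The mechanism I would use is the identification of the dual of $C(\T)$ in the \emph{relative} weak-$*$ topology: this topology is the initial topology induced by the functionals $\varphi\mapsto\int_\T\varphi g\,dm$ with $g\in L^1(\T)$, and since these separate points of $C(\T)$, every functional continuous in the relative weak-$*$ topology has the form $\varphi\mapsto\int_\T\varphi g\,dm$ for some $g\in L^1(\T)$. Now integration against the constant function $1$ is a weak-$*$ continuous functional on $L^\infty(\T)$, so composing it with the assumed weak-$*$ continuous $\Theta$ shows that $\varphi\mapsto\int_\T\Theta(\varphi)\,dm=\int_\T(\varphi\circ\theta)\,dm$ is relative-weak-$*$ continuous on $C(\T)$; hence there is $g\in L^1(\T)$ with $\int_\T\varphi\,d\mu=\int_\T(\varphi\circ\theta)\,dm=\int_\T\varphi g\,dm$ for all $\varphi\in C(\T)$. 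The uniqueness clause of the Riesz representation theorem then forces $d\mu=g\,dm$, so $\mu\ll m$. Consequently, if $E$ is a null set, choosing a Borel null set $B\supseteq E$ gives $m(\theta^{-1}(B))=\mu(B)=\int_B g\,dm=0$, whence $m(\theta^{-1}(E))=0$ by completeness of Lebesgue measure. I expect the identification of the relative-weak-$*$ dual of $C(\T)$ to be the main obstacle, together with the measure-theoretic bookkeeping needed to pass between Lebesgue and Borel null sets.

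Finally I would build the extension to $L^\infty(\T)$ from the null-set property. For $[\varphi]\in L^\infty(\T)$ I would choose a \emph{Borel} representative $\varphi_0$ (necessary because $\varphi\circ\theta$ need not be measurable for a merely Lebesgue-measurable $\varphi$) and set $\Theta([\varphi]):=[\varphi_0\circ\theta]$. The null-set property makes this independent of the representative, since if $\varphi_0=\psi_0$ off a null set $N$ then $\varphi_0\circ\theta=\psi_0\circ\theta$ off the null set $\theta^{-1}(N)$; the earlier estimate, applied off a null set, gives $\|\Theta([\varphi])\|_{L^\infty(\T)}\le\|[\varphi]\|_{L^\infty(\T)}$, and the pointwise algebraic identities show $\Theta$ is a unital $*$-endomorphism extending the map of part~(1). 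For weak-$*$ continuity and the attendant uniqueness, I would observe that for each $h\in L^1(\T)$ the measure $A\mapsto\int_{\theta^{-1}(A)}h\,dm$ is absolutely continuous with respect to $\mu$, hence with respect to $m$, with some density $k_h\in L^1(\T)$; a monotone-class argument extends the change-of-variables identity $\int_\T\Theta([\varphi])h\,dm=\int_\T\varphi k_h\,dm$ from $C(\T)$ to all of $L^\infty(\T)$, exhibiting $\varphi\mapsto\int_\T\Theta([\varphi])h\,dm$ as weak-$*$ continuous for every $h$. Thus $\Theta$ is weak-$*$ continuous, and since $C(\T)$ is weak-$*$ dense in $L^\infty(\T)$, any weak-$*$ continuous extension is determined by its restriction to $C(\T)$, yielding uniqueness. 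Because the construction uses $\theta$ only through the a.e.-defined composition, the resulting $\Theta$ depends solely on $[\theta]$.
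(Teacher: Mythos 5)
Your part~(1) is the same argument as the paper's (the contractivity estimate $\Vert[p\circ\theta]\Vert_{L^\infty}\leq\Vert p\Vert_{C(\T)}$ plus density of the trigonometric polynomials), just with the routine extension details written out. Your part~(2), however, is a genuinely different and correct route. The paper argues pointwise: it chooses a $G_\delta$ hull $E_0\supseteq E$, takes continuous $f_n$ with $f_n\downarrow 1_{E_0}$ pointwise, notes that $[f_n]\to 0$ weak-$*$ (since $E_0$ is null) while $[f_n\circ\theta]\to[1_{\theta^{-1}(E_0)}]$ weak-$*$, and invokes continuity of $\Theta$ at $0$ together with uniqueness of weak-$*$ limits. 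You instead work dually: you identify the continuous functionals on $C(\T)$ in the relative weak-$*$ topology with pairings against $L^1(\T)$, apply this to $\varphi\mapsto\int_\T(\varphi\circ\theta)\,dm$, and conclude from Riesz uniqueness that the pushforward measure $m\circ\theta^{-1}$ is absolutely continuous with respect to $m$. Each approach buys something. The paper's is more elementary -- only dominated convergence, no duality theorem for initial topologies, no Riesz representation. Yours is more robust and in fact uses strictly less than the hypothesis: you only need weak-$*$ continuity of the single functional $\varphi\mapsto\int_\T\Theta(\varphi)\,dm$. More importantly, your route sidesteps a delicate point in the paper's approximation step: a decreasing pointwise limit of continuous functions is upper semicontinuous, so continuous $f_n\downarrow 1_{E_0}$ pointwise \emph{everywhere} exist only when $E_0$ is closed (and even mere pointwise convergence forces $1_{E_0}$ to be of Baire class one, which fails for a dense null $G_\delta$); everywhere convergence on the range is needed there precisely because one cannot yet discard $m$-null sets under $\theta^{-1}$. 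Your absolute-continuity argument avoids this entirely, and moreover the quantitative form $d(m\circ\theta^{-1})=g\,dm$ feeds directly into your construction of the densities $k_h$, which gives a clean proof of the weak-$*$ continuity of the extended endomorphism -- a point, along with the need for Borel representatives in defining $[\varphi\circ\theta]$, that the paper compresses into ``the remaining assertions are immediate.'' Your measure-theoretic bookkeeping (Borel hull $B\supseteq E$, measurability of $\theta^{-1}(B)$, completeness of $m$) is also correct as written.
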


\begin{proof} For the first assertion it suffices to note that if
$p$ is a trigonometric polynomial, then since $\theta$ is assumed
to map $\T$ to $\T$, $\Vert[p\circ\theta]\Vert_{L^{\infty}}=\sup_{z\in\T}\vert p(\theta(z))\vert\leq\sup_{z\in\T}\vert p(z)\vert=\Vert p\Vert_{C(\T)}$.

For the second assertion, fix the Lebesgue null set $E$, and choose a $G_{\delta}$ set
$E_{0}$ containing $E$ such that $E_{0}\backslash E$ has measure
zero. So, if $\{f_{n}\}_{n\geq0}$ is a sequence in $C(\T)$ such
that $f_{n}\downarrow1_{E_{0}}$%
\footnote{We write the characteristic function (or indicator function) of a
set $E$ as $1_{E}$.%
} pointwise, then $[f_{n}]$ converges to $[1_{E_{0}}]=[1_{E}]$ weak-$*$.
But also, $f_{n}\circ\theta\downarrow1_{E_{0}}\circ\theta=1_{\theta^{-1}(E_{0})}$
pointwise. Therefore, $[f_{n}\circ\theta]$ converges to $[1_{E_{0}}\circ\theta]=[1_{\theta^{-1}(E_{0})}]$
weak-$*$. As $E$ is a null set, so is $E_{0}$, and
the $[f_{n}]$ converge to $0$ weak-$*$. Our hypothesis then implies
that the $[\Theta(f_{n})]=[f_{n}\circ\theta]$ converge to $0$ weak-$*$,
proving that $m(\theta^{-1}(E_{0}))=0$.  As $\theta^{-1}(E)\subseteq\theta^{-1}(E_{0})$ it follows that $\theta^{-1}(E)$ is also a null set, as desired.  The remaining assertions are immediate.
\end{proof}

Because of this lemma, if $b$ is an inner function that may be defined only on a subset $F$ of $\T$ with $m(\T\backslash F)=0$, it does no harm to extend $b$ to all of $\T$ by setting $b(z)=1$ for all $z\in\T\backslash F$.

Next, we want to say a few words about $*$-endomorphisms of $B(H)$,
where $H$ is a separable Hilbert space. Our discussion largely follows
Section 2 of \cite{wA89}. A \emph{Cuntz family} on $H$ is an $N$-tuple
of isometries $\{S_{i}\}_{i=1}^{N}$ on $H$ with mutually orthogonal
ranges that together span $H$; here the number $N$ may be a positive
integer or $\infty$. A Cuntz family $S=\{S_{i}\}_{i=1}^{N}$ on $H$
determines a map $\alpha_{S}:B(H)\to B(H)$ via 
\begin{equation}\label{cuntzinduce}
\alpha_{S}(T)=\sum_{i=1}^{N}S_{i}TS_{i}^{*},\qquad T\in B(H).
\end{equation}
 (If $N=\infty$, this sum is convergent in the strong operator topology.)
The map $\alpha_{S}$ is readily seen to be a $*$-endomorphism of
$B(H)$; multiplicativity is deduced from the fact that a tuple $S=\{S_{i}\}_{i=1}^{N}$
is a Cuntz family if and only if the \emph{Cuntz relations} \begin{equation}
S_{i}^{*}S_{j}=\delta_{ij}I,\qquad1\leq i,j\leq N,\label{eq:Cuntz 1}\end{equation}
 and \begin{equation}
\sum_{i=1}^{N}S_{i}S_{i}^{*}=I\label{eq:Cuntz 2}\end{equation}
 are satisfied. (These relations are named after J. Cuntz, who made
a penetrating analysis of them in \cite{jC77}.)

Significantly, \emph{every} $*$-endomorphism $\alpha$ of $B(H)$, with $H$ separable, is of the form $\alpha_{S}$ for some Cuntz family $S$.  We recall the details.  Fixing a $*$-endomorphism $\alpha$, define $E=\{S\in B(H)\mid ST=\alpha(T)S,\, T\in B(H)\}$.  A short calculation shows that for any $S_{1}$ and $S_{2}$ in $E$
the product $S_{2}^{*}S_{1}$ commutes with all elements of $B(H)$,
and is hence a scalar. We may thus define an inner product $\<\cdot,\cdot\>$
on $E$ by the formula 
$$
\< S_{1},S_{2}\> I=S_{2}^{*}S_{1},\qquad S_{1},S_{2}\in E,
$$
and $E$ with this inner product is a Hilbert space.  It is readily checked that any orthonormal basis $S=\{S_{i}\}_{i=1}^{N}$
for $E$ is a Cuntz family satisfying $\alpha=\alpha_{S}$, so it
is enough to know that $E$ \emph{has} an orthonormal basis - that
is, that $E\neq\{0\}$. This follows from the fact that a $*$-endomorphism
of $B(H)$, when $H$ is separable, is necessarily ultraweakly continuous%
\footnote{This non-trivial fact is discussed in detail in \cite[Section V.5]{mT79}.%
}, and that an ultraweakly continuous unital representation of $B(H)$
is necessarily unitarily equivalent to a multiple of the identity
representation of $B(H)$. That multiple is the dimension of $E$.

The correspondence between endomorphisms and Cuntz families is not
quite one-to-one. However, as Laca observed \cite[Proposition 2.2]{mL93},
if $S=\{S_{i}\}_{i=1}^{N}$ and $\tilde{S}=\{\tilde{S}_{i}\}_{i=1}^{N}$
are two Cuntz families such that $\alpha_{S}=\alpha_{\tilde{S}}$,
then there is a unitary matrix $(u_{ij})$ so that $\widetilde{S}_{i}=\sum_{j}u_{ij}S_{j}$,
and conversely. The reason is that $S$ and $\tilde{S}$ are both orthonormal
bases for the same Hilbert space $E$.  (More concretely, one may just
check that the scalars $u_{ij}=S_{j}^{*}\widetilde{S}_{i}$ have the
desired properties.)

Our goal, then, is to describe the collection of Cuntz families $S=\{S_{i}\}_{i=1}^{N}$
on $L^{2}(\mathbb{T})$ and $R=\{R_{i}\}_{i=1}^{N}$ on $H^{2}(\mathbb{T})$
such that $(\pi,\alpha_{S})$ is a covariant representation of $(L^{\infty}(\T),\beta)$
and $(\tau,\alpha_{R})$ is a covariant representation of $(\mathfrak{T},\beta)$
in the sense of equations~\eqref{eq:cov1} and \eqref{eq:cov2}:
\begin{equation}
\sum_{i=1}^{N}S_{i}\pi(\varphi)S_{i}^{*}=\pi(\beta(\varphi))\label{eq:Cuntz1a}, \qquad \varphi \in L^{\infty}(\mathbb{T}),
\end{equation}
 and \begin{equation}
\sum_{i=1}^{N}R_{i}\tau(\varphi)R_{i}^{*}=\tau(\beta(\varphi))\label{eq:Cuntz2a}, \qquad \varphi \in L^{\infty}(\mathbb{T}).
\end{equation}
Finally, we adopt the following notation for Blaschke products.
If $w$ is a nonzero point of the open unit disc $\mathbb{D}$ then
$b_{w}$ will denote the function \[
b_{w}(z):=\frac{|w|}{w}\frac{w-z}{1-\overline{w}z},\]
and  $b_{0}(z):=z$. If $a_{1},a_{2},\ldots,a_{N}$ is a finite list of
not-necessarily-distinct numbers in $\mathbb{D}$, then we will write
$b=\Pi_{j=1}^{N}b_{a_{j}}$ for the Blaschke product with zeros at
$a_{1},a_{2},\ldots,a_{N}$, i.e., multiplicity will be taken into
account.

\section{Rochberg's Observation}

Our analysis hinges on an observation that we learned from R. Rochberg's paper \cite{rR73}.  A preliminary remark on isometries in abstract Hilbert space is useful.  If $V$ is an isometry on a Hilbert space $H$, and $D$ is the subspace $H \ominus  VH$, it is easy to check that the spaces $D, VD, V^2 D, \dots$ are mutually orthogonal, and that $(\bigoplus_{k \geq 0} V^k D)^{\perp} = \bigcap_{j \geq 0} V^j H$, so that $H = \bigoplus_{k \geq 0} V^k D$ if and only if $V$ is \emph{pure} in the sense that $\bigcap_{j \geq 0} V^j H = \{0\}$.  

If $H = H^2(\T)$ and $V$ is the isometry $\tau(b) = \pi(b)|_{H^2(\T)}$ induced by a nonconstant inner function $b$, it turns out that $V$ is pure, and that in fact $D$ is a complete wandering subspace for the unitary $\pi(b)$ in the sense of \eqref{l2span} below.  This is a minor modification of a point made in \cite[Theorem 1]{rR73}.
\begin{lemma}\label{Rochbergs Lemma} Let $b$ be a nonconstant inner function, and let $\mathcal{D}:=H^{2}(\mathbb{T})\ominus\pi(b)H^{2}(\mathbb{T})$.  Then
\begin{equation}\label{h2span}
H^2(\T) = \bigoplus_{k \geq 0} \pi(b)^k \mathcal{D},
\end{equation}
and 
\begin{equation}\label{l2span}
L^{2}(\mathbb{T})=\bigoplus_{n \in \Z} \pi(b)^{n}\mathcal{D}.
\end{equation}
\end{lemma}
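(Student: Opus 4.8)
The plan is to derive both identities from the single fact that $V:=\tau(b)$ is a \emph{pure} isometry on $H^2(\T)$, so the crux is establishing purity. For \eqref{h2span} I would first note that $\pi(b)H^2(\T)=\tau(b)H^2(\T)$, so $\mathcal{D}$ is precisely the defect space $H^2(\T)\ominus VH^2(\T)$ of $V$ and $\pi(b)^k\mathcal{D}=V^k\mathcal{D}$ for $k\geq 0$. By the preliminary remark preceding the lemma, the spaces $\{\pi(b)^k\mathcal{D}\}_{k\geq 0}$ are then automatically mutually orthogonal, and \eqref{h2span} holds if and only if $V$ is pure, i.e. $\bigcap_{j\geq 0}\pi(b)^jH^2(\T)=\{0\}$. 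Thus \eqref{h2span} reduces entirely to proving purity.

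To prove purity, suppose $f\in\bigcap_{j\geq 0}\pi(b)^jH^2(\T)$ and write $f=b^jg_j$ with $g_j\in H^2(\T)$ and $\Vert g_j\Vert_2=\Vert f\Vert_2$ (using $|b|=1$ a.e.). Regarding these as analytic functions on $\mathbb{D}$, I would combine the $H^2$ point-evaluation estimate $|g_j(z)|\leq \Vert g_j\Vert_2/\sqrt{1-|z|^2}$ with the strict inequality $|b(z)|<1$ for $z\in\mathbb{D}$ --- valid since $b$ is nonconstant and inner, by the maximum modulus principle --- to obtain $|f(z)|\leq |b(z)|^j\,\Vert f\Vert_2/\sqrt{1-|z|^2}\to 0$ as $j\to\infty$ for each fixed $z\in\mathbb{D}$. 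Hence $f$ vanishes identically on $\mathbb{D}$, so $f=0$, and \eqref{h2span} follows.

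For \eqref{l2span}, mutual orthogonality of the summands over all of $\Z$ is immediate from the unitarity of $\pi(b)$ on $L^2(\T)$: for $n<m$ one has $\<\pi(b)^nd_1,\pi(b)^md_2\>=\<d_1,\pi(b)^{m-n}d_2\>=0$, since $d_1\in\mathcal{D}$ is orthogonal to $\pi(b)H^2(\T)\supseteq\pi(b)^{m-n}\mathcal{D}$. The real content is that the summands span $L^2(\T)$. Using \eqref{h2span} and unitarity, $\pi(b)^{-m}H^2(\T)=\bigoplus_{k\geq -m}\pi(b)^k\mathcal{D}$, and these subspaces increase with $m$ to $\bigoplus_{n\in\Z}\pi(b)^n\mathcal{D}$; so it suffices to show $\overline{\bigcup_{m\geq 0}\pi(b)^{-m}H^2(\T)}=L^2(\T)$, equivalently that $L:=\bigcap_{m\geq 0}\pi(b)^{-m}\bigl(L^2(\T)\ominus H^2(\T)\bigr)$ is trivial.

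The finishing trick is conjugation. If $f\in L$, then $b^mf\in L^2(\T)\ominus H^2(\T)$ for all $m\geq 0$; taking complex conjugates and using $\overline{L^2(\T)\ominus H^2(\T)}\subseteq H^2(\T)$ gives $\overline f\in b^mH^2(\T)$ for every $m$, so $\overline f\in\bigcap_{m\geq 0}\pi(b)^mH^2(\T)=\{0\}$ by the purity already established. Thus $f=0$, whence $L=\{0\}$ and \eqref{l2span} follows. I expect the purity step to be the main obstacle: it is the only place where genuine function theory enters (the strict maximum principle for $b$ together with the $H^2$ growth bound), whereas everything else is bookkeeping with the unitary $\pi(b)$ and the conjugation symmetry relating $H^2(\T)$ to its orthogonal complement.
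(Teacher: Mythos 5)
Your proposal is correct, but it reaches both conclusions by a genuinely different route than the paper. For purity (the key to \eqref{h2span}), the paper stays within its declared Hilbert-space framework: it observes that $K=\bigcap_{j\geq 0}\pi(b)^jH^2(\T)$ is invariant under the unilateral shift, invokes Beurling's theorem to write $K=\pi(\theta)H^2(\T)$ if $K\neq\{0\}$, and derives the contradiction $\pi(b)H^2(\T)=H^2(\T)$; you instead argue pointwise on $\D$, combining the $H^2$ evaluation bound $|g(z)|\leq\Vert g\Vert_2/\sqrt{1-|z|^2}$ with the strict maximum-modulus inequality $|b(z)|<1$ to force any $f\in K$ to vanish on $\D$. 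That is a classical and perfectly valid argument, though note it imports exactly the kind of function theory (analytic extensions, point evaluations, maximum modulus) that the paper is deliberately avoiding in favor of elementary Hilbert-space methods in the spirit of Helson and Lowdenslager. For \eqref{l2span} the divergence is the other way around: the paper appeals to the Helson--Lowdenslager dichotomy for $\pi(z)$-invariant subspaces of $L^2(\T)$ (either $\pi(\theta)H^2(\T)$ with $\theta$ unimodular, or $\pi(1_E)L^2(\T)$) and rules out the first case, whereas you use the conjugation symmetry $\overline{L^2(\T)\ominus H^2(\T)}\subseteq H^2(\T)$ together with $\overline{b}=b^{-1}$ a.e.\ on $\T$ to reduce the spanning statement back to the purity already proved. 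Your conjugation trick is arguably more elementary than invoking Helson--Lowdenslager and neatly collapses the second half of the lemma into the first; the paper's route has the didactic virtue of exhibiting both halves as corollaries of the Beurling/Helson--Lowdenslager invariant-subspace theory it is promoting. All the supporting steps you state (the identifications $\pi(b)^{-m}H^2(\T)=\bigoplus_{k\geq -m}\pi(b)^k\mathcal{D}$, the orthogonality computation, and the passage from the a.e.\ boundary factorization $f=b^jg_j$ to the analytic identity on $\D$) are correct as written.
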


\begin{proof}  As we have just observed, equation \eqref{h2span} follows once we know that the space $K:=\bigcap_{n=0}^{\infty}\pi(b)^{n}H^{2}(\mathbb{T})$ is the zero subspace.  But as $\pi(b)$ commutes with $\pi(z)$, the space $K$ is invariant for the unilateral shift $\tau(z) = \pi(z)|_{H^2(\T)}$.  If $K \neq \{0\}$, by Beurling's theorem there is an inner function $\theta$ with $K = \pi(\theta) H^2(\T)$.  As $\pi(b) K = K$ by definition, we see that $\pi(b) \pi(\theta) H^2(\T) = \pi(\theta) H^2(\T)$, and applying $\pi(\theta^{-1})$ to both sides we conclude that $\pi(b) H^2(\T) = H^2(\T)$.  But $b$ is nonconstant, so by the uniqueness assertion in Beurling's theorem (see \cite[Theorem 3]{hH64}), $\pi(b) H^2(\T)$ is a proper subspace of $H^2(\T)$.  This contradiction shows that $K = \{0\}$, and \eqref{h2span} follows.

Since $\pi(b)$ is a unitary on $L^2(\T)$, it is immediate from \eqref{h2span} that the spaces $\pi(b)^n \mathcal{D}$, $n \in \Z$, are mutually orthogonal.  Letting $L = \bigvee_{k \in \Z} \pi(b)^k \mathcal{D}$, it is clear from \eqref{h2span} that $L = \bigvee_{k \geq 0} \pi(b)^{-k} H^2(\T)$, and thus that $L$ is invariant under $\pi(z)$.  By Helson and Lowdenslager's
generalization of Beurling's theorem (see \cite[Section 1]{HL61} or \cite[Theorem 3]{hH64}), either there is a unimodular $\theta \in L^{\infty}(\T)$ with $L =\pi(\theta)H^{2}(\mathbb{T})$ or there is a measurable $E \subseteq \T$ satisfying $L = \pi(1_{E})L^{2}(\mathbb{T})$.  In the first case, as clearly $\pi(b) L = L$, we conclude that $\pi(\theta) \pi(b) H^2(\T) = \pi(\theta) H^2(\T)$, and applying $\pi(\theta^{-1})$ to both sides we conclude that $\pi(b) H^2(\T) = H^2(\T)$, which contradicts the fact that $b$ is not constant.  Thus there is $E \subseteq \T$ with $L = \pi(1_E) L^2(\T)$, and the fact that $L$ contains $H^2(\T)$ implies $E = \T$, so $L = L^2(\T)$ as desired.  
\end{proof}

\begin{corollary}\label{Cor:Well defined beta} If $b$ is an arbitrary
inner function and if $\beta$ is defined on trigonometric polynomials
$p$ by the formula $\beta(p):=p\circ b$, then $\beta$ extends to
a $*$-endomorphism of $L^{\infty}(\T)$ that is continuous with respect
to the weak-$*$ topology. \end{corollary}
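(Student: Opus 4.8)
The plan is to verify the hypothesis of Lemma~\ref{lem: Null set}(\ref{nullsettwo}) for $\theta = b$, namely that the $*$-homomorphism $\beta\colon C(\T)\to L^\infty(\T)$ furnished by Lemma~\ref{lem: Null set}(\ref{nullsetone}) is continuous from the weak-$*$ topology of $L^\infty(\T)$ restricted to $C(\T)$ to the weak-$*$ topology on $L^\infty(\T)$; a weak-$*$ continuous $*$-endomorphism extension will then be produced along the way. We may assume $b$ is nonconstant, so that Lemma~\ref{Rochbergs Lemma} applies. The guiding observation is that $\pi(b)$ is a \emph{unitary} on $L^2(\T)$ (since $|b|=1$ a.e.) whose functional calculus implements $\beta$: for a trigonometric polynomial $p=\sum_n c_n e_n$ one has $\pi(\beta(p)) = \sum_n c_n\pi(b)^n = p(\pi(b))$, using $\pi(b)^{-1}=\pi(b)^*=\pi(\bar b)$. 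As $\varphi\mapsto\varphi(\pi(b))$ and $\pi\circ\beta$ are norm-continuous $*$-homomorphisms on $C(\T)$ agreeing on the dense subalgebra of trigonometric polynomials, I get $\pi(\beta(\varphi)) = \varphi(\pi(b))$ for every $\varphi\in C(\T)$.

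The key step is to use Rochberg's Lemma to see that this functional calculus is weak-$*$ continuous. By \eqref{l2span}, $\mathcal{D}$ is a complete wandering subspace for the unitary $\pi(b)$; fixing an orthonormal basis of $\mathcal{D}$ produces a unitary $W\colon L^2(\T)\to L^2(\T)\otimes\mathcal{D}$ carrying $\pi(b)$ to the bilateral shift $\pi(z)\otimes I_{\mathcal{D}}$. Since $\varphi(\pi(z)) = \pi(\varphi)$ for $\varphi\in C(\T)$, the formula
\[
\Psi(\varphi) := W^*\big(\pi(\varphi)\otimes I_{\mathcal{D}}\big)W
\]
defines a $*$-homomorphism $\Psi\colon L^\infty(\T)\to B(L^2(\T))$ that agrees with $\varphi\mapsto\varphi(\pi(b)) = \pi(\beta(\varphi))$ on $C(\T)$. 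Because $\pi$ is weak-$*$-to-ultraweakly continuous (Section~\ref{sec:preliminaries}), tensoring with $I_{\mathcal{D}}$ preserves ultraweak continuity, and conjugation by $W$ is ultraweakly continuous, the map $\Psi$ is continuous from the weak-$*$ topology on $L^\infty(\T)$ to the ultraweak topology on $B(L^2(\T))$. Morally, the absolute continuity of the spectral measure of $\pi(b)$ — which is precisely what the complete wandering subspace provides — is what makes the functional calculus weak-$*$ continuous.

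To finish, I would transfer this continuity back through $\pi$. The representation $\pi$ is a $*$-isomorphism of $L^\infty(\T)$ onto the von Neumann algebra $\pi(L^\infty(\T))$, hence a weak-$*$ homeomorphism onto its ultraweakly closed image. Since $\Psi$ is weak-$*$ continuous, agrees with $\pi\circ\beta$ on the weak-$*$ dense subspace $C(\T)$, and $\pi(L^\infty(\T))$ is ultraweakly closed, continuity forces $\Psi(L^\infty(\T))\subseteq\pi(L^\infty(\T))$; thus $\gamma := \pi^{-1}\circ\Psi$ is a well-defined weak-$*$ continuous $*$-endomorphism of $L^\infty(\T)$ agreeing with $\beta$ on trigonometric polynomials, which is exactly the assertion of the corollary. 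In particular $\gamma|_{C(\T)} = \beta$ is weak-$*$ continuous, so Lemma~\ref{lem: Null set}(\ref{nullsettwo}) applies and identifies $\gamma$ as $[\varphi]\mapsto[\varphi\circ b]$, recovering en route the classical fact that $b$ pulls Lebesgue null sets back to null sets.

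I expect the main obstacle to be the passage from ``complete wandering subspace'' to ``weak-$*$ continuous functional calculus'': one must recognize that \eqref{l2span} says precisely that $\pi(b)$ is unitarily a bilateral shift and so has $m$-absolutely continuous spectrum, and then manage the bookkeeping (the identity $\varphi(\pi(z)\otimes I_{\mathcal{D}}) = \pi(\varphi)\otimes I_{\mathcal{D}}$, and preservation of ultraweak continuity under $-\otimes I_{\mathcal{D}}$ and under conjugation by $W$) cleanly. The remaining subtlety is the last paragraph's argument that $\Psi$ actually takes values in $\pi(L^\infty(\T))$, so that $\pi^{-1}\circ\Psi$ is defined on all of $L^\infty(\T)$ and not merely on $C(\T)$; this is where the weak-$*$ density of $C(\T)$ together with the ultraweak closedness of the maximal abelian algebra $\pi(L^\infty(\T))$ is used.
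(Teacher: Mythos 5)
Your proof is correct and takes essentially the same route as the paper: both use Lemma~\ref{Rochbergs Lemma} to conjugate $\pi(b)$ into the multiple $\pi(z)\otimes I_{\mathcal{D}}$ of the bilateral shift via a unitary $W$, deduce from the weak-$*$/ultraweak homeomorphism property of $\pi$ that the resulting functional calculus (hence $\beta$ on $C(\T)$) is weak-$*$ continuous, and then invoke Lemma~\ref{lem: Null set}\eqref{nullsettwo}. Your additional step of defining $\Psi$ on all of $L^{\infty}(\T)$ and checking $\Psi(L^{\infty}(\T))\subseteq\pi(L^{\infty}(\T))$ only makes explicit an extension that the paper obtains directly from Lemma~\ref{lem: Null set}, so it is an elaboration of, not a departure from, the paper's argument.
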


\begin{proof}Lemma \ref{Rochbergs Lemma} implies that $\pi(b)$
is unitarily equivalent to a multiple of the bilateral shift - the
multiple being $\dim(\mathcal{D})$. Thus there is a Hilbert space
isomorphism $W$ from $L^{2}(\T)$ to $L^{2}(\T)\otimes\mathcal{D}$
such that $\pi(b)=W^{-1}(\pi(z)\otimes I_{\mathcal{D}})W$. So, for
every trigonometric polynomial $p$, \[
\pi(\beta(p))=p(\pi(b))=W^{-1}p(\pi(z)\otimes I_{\mathcal{D}})W.\]
 Since $\pi$ is a homeomorphism with respect to the weak-$*$ topology
on $L^{\infty}(\T)$ and the ultraweak topology restricted to the
range of $\pi$, it is evident that $b$ and $\beta$ satisfy the
hypotheses of Lemma \ref{lem: Null set}, and the desired result follows. \end{proof}

Of course, the proof just given recapitulates parts of the well-known
theory of the functional calculus for unitary operators.

\begin{theorem}\label{Thm: Main1} Let $b$ be a non-constant inner
function.
If $\{v_{i}\}_{i=1}^{N}$ is an orthonormal basis for $\mathcal{D}=H^{2}(\mathbb{T})\ominus\pi(b)H^{2}(\mathbb{T})$, then there is a unique Cuntz family $S = \{S_i\}_{i=1}^N$ on $L^2(\T)$ satisfying
\begin{equation}\label{cuntzexist}
S_i(e_n) = v_i b^n, \qquad 1 \leq i \leq N, \quad n \in \Z.
\end{equation}
The endomorphism $\alpha_{S}$ determined by $S$ as in \eqref{cuntzinduce} satisfies $
\alpha_S \circ\pi=\pi\circ\beta$, where $\beta$ is the endomorphism $\varphi \mapsto \varphi \circ b$ of $L^{\infty}(\T)$.

Each $S_{i}$ is reduced by $H^{2}(\mathbb{T})$, and if $R_{i}$
is the restriction of $S_{i}$ to $H^{2}(\mathbb{T})$, then $R = \{R_{i}\}_{i=1}^{N}$
is a Cuntz family on $H^{2}(\mathbb{T})$ with the property that $\alpha_{R}\circ\tau=\tau\circ\beta$.
\end{theorem}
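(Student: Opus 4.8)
The plan is to derive everything from the two orthogonal decompositions \eqref{h2span} and \eqref{l2span} of Lemma \ref{Rochbergs Lemma}; given those, the theorem is largely bookkeeping. Since $\{v_i\}_{i=1}^N$ is an orthonormal basis for $\mathcal{D}$ and $\pi(b)$ is unitary, \eqref{l2span} shows that $\{\pi(b)^n v_i\}_{n\in\Z,\,1\le i\le N}$ is an orthonormal basis for $L^2(\T)$; note $\pi(b)^n v_i = v_i b^n$. For each fixed $i$, the prescription $e_n \mapsto \pi(b)^n v_i$ sends the orthonormal basis $\{e_n\}_{n\in\Z}$ to the orthonormal set $\{\pi(b)^n v_i\}_{n\in\Z}$, so it extends to a unique isometry $S_i$ satisfying \eqref{cuntzexist}, uniqueness being automatic since an operator is determined on a basis. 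The ranges $S_i L^2(\T) = \overline{\operatorname{span}}\{\pi(b)^n v_i : n\in\Z\}$ are then mutually orthogonal and jointly span $L^2(\T)$ by \eqref{l2span}, so $S = \{S_i\}$ satisfies \eqref{eq:Cuntz 1}--\eqref{eq:Cuntz 2} and is a Cuntz family.

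For the covariance \eqref{eq:Cuntz1a} I would first establish the sharper intertwining identity $S_i\pi(\varphi) = \pi(\beta(\varphi))S_i$ for every $i$ and every $\varphi\in L^\infty(\T)$. Both sides are continuous from the weak-$*$ topology on $L^\infty(\T)$ to the weak operator topology on $B(L^2(\T))$ (using the continuity of $\pi$ and $\beta$ together with the boundedness of $S_i$), so it suffices to check the identity on the weak-$*$-dense set of trigonometric polynomials, hence by linearity on each $\varphi = e_m$. There it is a one-line computation on basis vectors: $S_i\pi(e_m)e_n = S_i e_{m+n} = \pi(b)^{m+n}v_i = \pi(b)^m\,\pi(b)^n v_i = \pi(\beta(e_m))S_i e_n$, using $\beta(e_m)=b^m$. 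Given the intertwining identity, \eqref{eq:Cuntz 2} yields $\sum_i S_i\pi(\varphi)S_i^* = \pi(\beta(\varphi))\sum_i S_i S_i^* = \pi(\beta(\varphi))$, which is exactly $\alpha_S\circ\pi = \pi\circ\beta$.

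Next I would show each $S_i$ is reduced by $H^2(\T)$ by splitting the basis according to the sign of the index. For $n\ge 0$, $S_i e_n = \pi(b)^n v_i$ lies in $\bigoplus_{k\ge 0}\pi(b)^k\mathcal{D} = H^2(\T)$ by \eqref{h2span}, so $H^2(\T)$ is invariant; for $n<0$, $S_i e_n = \pi(b)^n v_i$ lies in $\bigoplus_{k<0}\pi(b)^k\mathcal{D}$, which equals $L^2(\T)\ominus H^2(\T)$ upon comparing \eqref{l2span} with \eqref{h2span}, so the orthogonal complement is invariant too. Hence $S_i$ commutes with $P$, and $R_i := S_i|_{H^2(\T)}$ is an isometry of $H^2(\T)$ with $R_i^* = S_i^*|_{H^2(\T)}$. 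The ranges $R_i H^2(\T) = \overline{\operatorname{span}}\{\pi(b)^n v_i : n\ge 0\}$ are orthogonal and span $H^2(\T)$ by \eqref{h2span}, so $R=\{R_i\}$ is a Cuntz family. Finally, for $x\in H^2(\T)$ the reduction gives $R_i\tau(\varphi)R_i^* x = S_i P\pi(\varphi)S_i^* x = P S_i\pi(\varphi)S_i^* x$; summing over $i$ and invoking the $L^2$-covariance \eqref{eq:Cuntz1a} already proved yields $\sum_i R_i\tau(\varphi)R_i^* x = P\pi(\beta(\varphi))x = \tau(\beta(\varphi))x$, which is \eqref{eq:Cuntz2a}.

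I expect the main obstacle to be care rather than depth, since Lemma \ref{Rochbergs Lemma} supplies all the substantive input. Two points call for genuine attention: the weak-$*$ continuity argument used to pass from trigonometric polynomials to all of $L^\infty(\T)$ in the covariance identity, and the negative-index case of the reduction, which truly requires the full bilateral decomposition \eqref{l2span} (the one-sided \eqref{h2span} alone does not locate $\pi(b)^n v_i$ for $n<0$). It is precisely because the $L^2$ covariance is in hand that the $H^2$ covariance collapses to a clean compression, so establishing \eqref{eq:Cuntz1a} in the intertwining form \emph{before} turning to $H^2(\T)$ is the organizing idea of the argument.
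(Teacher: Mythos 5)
Your proof is correct and follows essentially the same route as the paper: both derive the isometries, the Cuntz relations, and the reduction by $H^2(\T)$ directly from the two decompositions of Lemma \ref{Rochbergs Lemma}, verify the intertwining on exponentials, and extend to all of $L^{\infty}(\T)$ by the weak-$*$ continuity supplied by Corollary \ref{Cor:Well defined beta}. The only differences are cosmetic ones of ordering and explicitness (you prove the intertwining for all $\varphi$ before summing, and you spell out the compression to $H^2(\T)$ that the paper calls immediate).
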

The proof of Lemma~\ref{Rochbergs Lemma} showed that $\mathcal{D} = H^2(\T)
\ominus \pi(b) H^2(\T)$ is nonzero, so it has an orthonormal basis; its
dimension $N$ may be finite or infinite.  It is well known that $N$ is finite if
and only if $b$ is a finite Blaschke product. (See Remark \ref{Canonical
Basis}.)
\begin{proof} Lemma~\ref{Rochbergs Lemma} implies that if $v$ is any unit vector in $\mathcal{D}$, the set $\{v b^n: n \in \Z\}$ is an orthonormal set of vectors in $L^2(\T)$.  It follows that for any $1 \leq i \leq N$, there is a unique isometry $S_i$ on $L^2$ satisfying $S_i(e_n) = v_i b^n$ for all $n \in \Z$.  

Lemma~\ref{Rochbergs Lemma} also implies that if $v$ and $w$ are any orthogonal unit vectors in $\mathcal{D}$, the closed linear spans of $\{v b^n: n \in \Z\}$ and $\{w b^n: n \in \Z\}$ are orthogonal.  It follows that the isometries in the tuple $S = \{S_i\}_{i=1}^N$ just defined have orthogonal ranges.  Let $K$ denote the closed linear span of the ranges of the operators $\{S_i\}_{i=1}^N$.  By construction, for all $n \in \Z$ we have $v_i b^n \in K$ for all $1 \leq i \leq N$, and thus $K \supseteq \pi(b)^n \mathcal{D}$ for all $n \in \Z$.  By Lemma~\ref{Rochbergs Lemma} we conclude that $K = L^2(\T)$ and $S$ is a Cuntz family of isometries.

Viewing each $e_{n}$ as an element of $L^{\infty}(\mathbb{T})$, 
it is evident that \begin{equation}
S_{i}\pi(e_{n})=\pi(b^{n})S_{i}=\pi(\beta(e_{n}))S_{i}, \qquad 1 \leq i \leq N, \quad n \in \Z . \label{eq:PrimitiveCovariant}\end{equation}
Since this equation is linear in the
$e_{n}$, we conclude that $S_{i}\pi(p)=\pi(\beta(p))S_{i}$ for every
$i$ and every trigonometric polynomial $p$. Consequently, 
\begin{equation*}
\pi(\beta(p)) = \pi(\beta(p))\sum_{i=1}^{N}S_{i}S_{i}^{*} 
= \sum_{i=1}^{N}S_{i}\pi(p)S_{i}^{*}
= \alpha_S(\pi(p))
\end{equation*}
is satisfied for every trigonometric polynomial $p$. It follows from Corollary \ref{Cor:Well defined beta}
that equation~\eqref{eq:Cuntz1a} is satisfied for all $\varphi\in L^{\infty}(\mathbb{T})$.

The fact that $H^2(\T)$ is invariant under each $S_i$ is immediate from the definition \eqref{cuntzexist}.  As Lemma~\ref{Rochbergs Lemma} implies that $\{v_i b^n: 1 \leq i \leq N, n < 0\}$ is an orthonormal basis of $H^2(\T)^{\perp}$, it is 
also clear from \eqref{cuntzexist} that $H^2(\T)^{\perp}$ is invariant under each $S_i$, so each $S_i$ is reduced by $H^2(\T)$.  The fact that $R$ is a Cuntz family on $H^{2}(\mathbb{T})$ satisfying $\alpha_{R}\circ\tau=\tau\circ\beta$ is then immediate. \end{proof}

Recall that $\mathfrak{T}$ is the $C^{*}$-algebra generated by all
the Toeplitz operators $\{\tau(\varphi)\mid\varphi\in L^{\infty}(\T)\}$.
We shall write $\mathfrak{T}(C(\T))$ for $C^{*}$-subalgebra generated
by the Toeplitz operators with continuous symbols, i.e., $\mathfrak{T}(C(\T))$
is the $C^{*}$-subalgebra of $B(H^{2}(\mathbb{T}))$ generated by
$\{\tau(\varphi)\mid\varphi\in C(\T)\}$. It is well known that $\mathfrak{T}(C(\mathbb{T}))=\{\tau(\varphi)+k\mid\varphi\in C(\mathbb{T}),\, k\in\mathfrak{K}\}$,
where $\mathfrak{K}$ denotes the algebra of compact operators on
$H^{2}(\mathbb{T})$ \cite[7.11 and 7.12]{rD98}.

\begin{corollary}\label{Cor:Extend to Toeplitz Algebra} If $b$
is an inner function, then the map $\tau(\varphi)\to\tau(\varphi\circ b)$,
$\varphi\in L^{\infty}(\T)$, extends to a $*$-endomorphism of $\mathfrak{T}$
that we will continue to denote by $\beta$. Further, $\beta$ leaves
$\mathfrak{T}(C(\T))$ invariant if and only if $b$ is a finite Blaschke
product. Thus, if $\iota$ denotes the identity representation of
$\mathfrak{T}$ on $H^{2}(\T)$, then any solution $\alpha_{+}$ of
equation~\eqref{eq:cov2} (equivalently, any solution $R:=\{R_{i}\}_{i=1}^{N}$
to equation~\eqref{eq:Cuntz2a}) yields a covariant representation
$(\iota,\alpha_{+})$ of $(\mathfrak{T},\beta)$ and $(\iota,\alpha_{+})$
preserves $(\mathfrak{T}(C(\T)),\beta)$ if and only if $b$ is a
finite Blaschke product. \end{corollary}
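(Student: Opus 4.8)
The plan is to obtain the endomorphism $\beta$ of $\mathfrak{T}$ by restricting the endomorphism $\alpha_R$ of $B(H^2(\T))$ furnished by Theorem~\ref{Thm: Main1}, and then to settle the invariance of $\mathfrak{T}(C(\T))$ using two classical function-theoretic facts. For the first assertion, recall that $\alpha_R$ is a $*$-endomorphism of $B(H^2(\T))$ with $\alpha_R(\tau(\varphi)) = \tau(\varphi\circ b)$ for every $\varphi \in L^\infty(\T)$, by \eqref{eq:Cuntz2a}. Since a $*$-homomorphism between $C^*$-algebras is automatically norm-continuous, and since $\alpha_R$ sends each generator $\tau(\varphi)$ of $\mathfrak{T}$ to the element $\tau(\varphi\circ b) \in \mathfrak{T}$, I would argue that $\alpha_R$ carries the $C^*$-algebra $\mathfrak{T}$ into itself; its restriction $\beta := \alpha_R|_{\mathfrak{T}}$ is then a $*$-endomorphism of $\mathfrak{T}$ extending $\tau(\varphi)\mapsto\tau(\varphi\circ b)$, and it is the unique such extension because the $\tau(\varphi)$ generate $\mathfrak{T}$.

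For the equivalence, first suppose $b$ is a finite Blaschke product. Then $b$ is continuous on $\T$ and maps $\T$ into $\T$, so $\varphi\circ b \in C(\T)$ whenever $\varphi \in C(\T)$. Hence $\beta$ carries each generator $\tau(\varphi)$, $\varphi \in C(\T)$, of $\mathfrak{T}(C(\T))$ to $\tau(\varphi\circ b)\in\mathfrak{T}(C(\T))$, and by continuity $\beta$ leaves $\mathfrak{T}(C(\T))$ invariant. The converse is where the real work lies. Assuming $\beta$ leaves $\mathfrak{T}(C(\T))$ invariant, I would specialize to $e_1(z)=z$: since $e_1 \in C(\T)$, invariance forces $\tau(b) = \beta(\tau(e_1)) \in \mathfrak{T}(C(\T))$. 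Writing $\tau(b) = \tau(\psi)+k$ with $\psi\in C(\T)$ and $k$ compact yields that $\tau(b-\psi)=k$ is a compact Toeplitz operator.

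The hard part is then exactly the two function-theoretic inputs: that the \emph{only} compact Toeplitz operator is $0$ (so that $b=\psi$ almost everywhere and hence $b \in C(\T)$), and that an inner function whose boundary values lie in $C(\T)$ must be a finite Blaschke product. For the latter I would note that $b \in H^\infty(\T) \cap C(\T)$ lies in the disc algebra, so $b$ is continuous on $\overline{\D}$ with $|b|=1$ on $\T$; its zeros then stay in a compact subset of $\D$ and are finite in number, and dividing by the finite Blaschke product with those zeros and invoking the maximum principle shows $b$ is a finite Blaschke product up to a unimodular constant. Combining these, invariance of $\mathfrak{T}(C(\T))$ forces $b$ to be a finite Blaschke product, completing the equivalence.

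Finally, for the covariance statement, let $\alpha_+$ be any solution of \eqref{eq:cov2}, so that $\alpha_+(\tau(\varphi)) = \tau(\varphi\circ b) = \beta(\tau(\varphi))$ for all $\varphi$. Both $\alpha_+|_{\mathfrak{T}}$ and $\beta$ are norm-continuous $*$-homomorphisms agreeing on the generators $\tau(\varphi)$, hence they agree on all of $\mathfrak{T}$; this is precisely the covariance identity $\alpha_+\circ\iota = \iota\circ\beta$, so $(\iota,\alpha_+)$ is a covariant representation of $(\mathfrak{T},\beta)$. Since $\alpha_+$ agrees with $\beta$ on $\mathfrak{T}$, the pair $(\iota,\alpha_+)$ restricts to a covariant representation of $(\mathfrak{T}(C(\T)),\beta)$ exactly when $\beta$ leaves $\mathfrak{T}(C(\T))$ invariant, which by the equivalence just established holds if and only if $b$ is a finite Blaschke product.
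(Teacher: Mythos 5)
Your proposal is correct and takes essentially the same route as the paper: $\beta$ is obtained by restricting the endomorphism $\alpha_{R}$ furnished by Theorem \ref{Thm: Main1} to $\mathfrak{T}$, and the converse direction is proved exactly as in the paper, by evaluating at $\varphi(z)=z$, writing $\tau(b)=\tau(f)+k$ via the identification $\mathfrak{T}(C(\T))=\{\tau(\varphi)+k\mid\varphi\in C(\T),\,k\in\mathfrak{K}\}$, and invoking the fact that the only compact Toeplitz operator is zero. The single divergence is the forward direction: the paper deduces invariance of $\mathfrak{T}(C(\T))$ from the finiteness of the Cuntz family $R$, which makes $\alpha_{+}$ preserve $\mathfrak{K}$, whereas you observe that the $*$-endomorphism $\beta$ sends each generator $\tau(\varphi)$, $\varphi\in C(\T)$, into $\mathfrak{T}(C(\T))$ and conclude by norm continuity of $*$-homomorphisms --- a valid and slightly cleaner shortcut that avoids the compacts entirely --- and you also spell out the standard fact, asserted without proof in the paper, that an inner function with continuous boundary values is a finite Blaschke product.
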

As elementary as this result seems to be, we do not know how to prove it without recourse to Theorem~\ref{Thm: Main1}.
\begin{proof}The existence of a solution $\alpha_{+}$ to equation
\eqref{eq:cov2} guarantees that the map $\tau(\varphi)\to\tau(\varphi\circ b)$,
$\varphi\in L^{\infty}(\T)$, extends to a $*$-endomorphism of $\mathfrak{T}$,
because $\alpha_{+}$ is a $C^{*}$-endomorphism of a larger $C^{*}$-algebra,
namely $B(H^{2}(\T))$. Thus Theorem \ref{Thm: Main1} shows that
composition with $b$ extends to $\mathfrak{T}$. If $b$ is a finite
Blaschke product then composition with $b$ leaves $C(\T)$ invariant,
i.e., $\beta$ leaves $C(\T)$ invariant. Since the solution $\alpha_{+}$
to equation~\eqref{eq:cov2} is of the form $\alpha_{R}$ where the
Cuntz family $R$ is \emph{finite}, $\alpha_{+}$ leaves $\mathfrak{K}$
invariant and, therefore, it leaves $\mathfrak{T}(C(\T))$ invariant
when $b$ is a finite Blaschke product. Conversely, if $\beta$ leaves
$\mathfrak{T}(C(\mathbb{T}))$ invariant, then letting $\varphi(z)=z$,
we see that $\tau(b)=\alpha_{+}(\tau(\varphi))=\tau\circ\beta(\varphi)$
must be of the form $\tau(f)+k$, for some compact operator $k$ and
some continuous function $f$. But then $\tau(b-f)=k$, and so, by
\cite[7.15]{rD98}, $b=f$ is continuous, and hence a finite Blaschke product. \end{proof}

Rochberg's analysis and Laca's result \cite[Proposition 2.2]{mL93} together yield the following.

\begin{corollary}\label{Thm: Solution2} A Cuntz family $R=\{R_{i}\}_{i=1}^{N}$
in $B(H^{2}(\T))$ satisfies the equation $\alpha_{R}\circ\tau=\tau\circ\beta$
if and only if there is an orthonormal basis $\{v_{i}\}_{i=1}^{N}$
for $\mathcal{D} = H^{2}(\T)\ominus\pi(b)H^{2}(\T)$ so that the $R_{i}$ may be expressed in terms of it as in Theorem \ref{Thm: Main1}. \end{corollary}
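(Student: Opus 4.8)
The plan is to treat the two implications separately. The forward direction is immediate: if $\{v_i\}_{i=1}^N$ is an orthonormal basis for $\mathcal{D} = H^2(\T) \ominus \pi(b)H^2(\T)$ and $R_i(e_n) = v_i b^n$ for $n \geq 0$, then $R = \{R_i\}_{i=1}^N$ is precisely the restriction to $H^2(\T)$ of the Cuntz family $S$ manufactured from $\{v_i\}$ in Theorem~\ref{Thm: Main1}, and that theorem already asserts that such an $R$ is a Cuntz family on $H^2(\T)$ satisfying $\alpha_R \circ \tau = \tau \circ \beta$. So nothing new is needed here, and the work is entirely in the reverse direction.

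For the reverse direction, let $R = \{R_i\}_{i=1}^N$ be an arbitrary Cuntz family on $H^2(\T)$ with $\alpha_R \circ \tau = \tau \circ \beta$. The decisive step is to promote this single covariance identity into one intertwining relation per isometry. To this end I would invoke the elementary Cuntz-family identity $\alpha_R(T) R_i = R_i T$, valid for every $T \in B(H^2(\T))$ and every $i$, which is immediate from $R_j^* R_i = \delta_{ij} I$. Taking $T = \tau(z)$ and using the covariance equation at the symbol $\varphi(z) = z$, where $\beta(\varphi) = b$, this yields $\tau(b) R_i = \alpha_R(\tau(z)) R_i = R_i \tau(z)$ for every $i$.

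With the relation $R_i \tau(z) = \tau(b) R_i$ in hand, the shape of each $R_i$ is forced. Setting $v_i := R_i(e_0)$ and iterating gives $R_i(e_n) = \tau(b)^n v_i = b^n v_i$ for all $n \geq 0$, so each $R_i$ has exactly the form appearing in Theorem~\ref{Thm: Main1}. It then remains only to recognize $\{v_i\}$ as an orthonormal basis of $\mathcal{D}$. Orthonormality follows from $R_j^* R_i = \delta_{ij} I$, since $(v_i, v_j) = (R_i e_0, R_j e_0) = (R_j^* R_i e_0, e_0) = \delta_{ij}$. For the spanning property I would use the other Cuntz relation $\sum_i R_i R_i^* = I$, which forces the ranges of the $R_i$ to fill out $H^2(\T)$; since $R_j^* R_i = \delta_{ij} I$ also makes the system $\{v_i b^n : 1 \leq i \leq N,\ n \geq 0\}$ orthonormal, this system is an orthonormal basis of $H^2(\T)$. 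Multiplying through by $b$ identifies $\pi(b)H^2(\T)$ with the closed span of the basis vectors having $n \geq 1$, so that $\mathcal{D} = H^2(\T) \ominus \pi(b)H^2(\T)$ is spanned by $\{v_i\}$; being orthonormal, these vectors form an orthonormal basis of $\mathcal{D}$, as required.

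The subtlety I expect to be the main obstacle is that the covariance equation $\alpha_R \circ \tau = \tau \circ \beta$ constrains $\alpha_R$ only on the Toeplitz algebra $\mathfrak{T}$, a proper subalgebra of $B(H^2(\T))$, so on its face it does not determine the Cuntz family $R$ at all. The identity $\alpha_R(T) R_i = R_i T$ is exactly what breaks this impasse, converting the algebra-level equation at $T = \tau(z)$ into a vector-level recursion that pins down each $R_i$ completely. Once the explicit form is established, Laca's result \cite[Proposition 2.2]{mL93} furnishes the complementary statement that any two solutions, arising from orthonormal bases $\{v_i\}$ and $\{v_i'\}$ of $\mathcal{D}$, are related by the unitary matrix $u_{ij} = (v_i, v_j')$ and hence induce one and the same endomorphism of $B(H^2(\T))$ --- confirming that the orthonormal bases of $\mathcal{D}$ parametrize precisely the solution set.
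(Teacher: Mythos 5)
Your proof is correct, but it takes a genuinely different route from the paper's. The paper argues softly: it invokes Corollary \ref{Cor:Extend to Toeplitz Algebra} to see that $\alpha_{R}$ preserves $\mathfrak{T}$, builds a reference family $\widetilde{R}$ from an arbitrary orthonormal basis of $\mathcal{D}$ via Theorem \ref{Thm: Main1}, observes that $\alpha_{R}$ and $\alpha_{\widetilde{R}}$ agree on $\mathfrak{T}$, and then upgrades this to $\alpha_{R}=\alpha_{\widetilde{R}}$ on all of $B(H^{2}(\T))$ using the ultraweak density of $\mathfrak{T}$ (which contains $\mathfrak{K}$) together with the nontrivial fact that endomorphisms of $B(H)$ are automatically ultraweakly continuous; finally Laca's Proposition 2.2 produces a scalar unitary matrix, which rotates the reference basis into the desired one. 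You instead work at the level of individual isometries: the identity $\alpha_{R}(T)R_{i}=R_{i}T$ (correctly derived from $R_{j}^{*}R_{i}=\delta_{ij}I$, and in fact the same ``multiply the covariance equation on the right by $S_{j}$'' trick the paper itself uses later in the proof of Theorem \ref{Thm:Solution 1}) applied at the single symbol $\varphi(z)=z$ gives $R_{i}\tau(z)=\tau(b)R_{i}$, whence $R_{i}(e_{n})=v_{i}b^{n}$ with $v_{i}:=R_{i}(e_{0})$, and the two Cuntz relations then directly show that $\{v_{i}b^{n}: 1\leq i\leq N,\ n\geq0\}$ is an orthonormal basis of $H^{2}(\T)$ and hence that $\{v_{i}\}$ is an orthonormal basis of $\mathcal{D}$, exactly as in the wandering-subspace picture of Lemma \ref{Rochbergs Lemma}. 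Your argument is more elementary and self-contained --- it avoids the Takesaki continuity theorem, Laca's proposition, the density argument, and Corollary \ref{Cor:Extend to Toeplitz Algebra} entirely, and it yields as a byproduct that $N$ must equal $\dim\mathcal{D}$ --- while the paper's argument buys the extra global conclusion that $\alpha_{R}=\alpha_{\widetilde{R}}$, i.e., that the endomorphism $\alpha_{+}$ solving \eqref{eq:cov2} is unique (a fact your closing remark recovers only afterwards, via Laca, from the explicit form of the solutions). Note also that your method is special to $H^{2}(\T)$: in the $L^{2}(\T)$ setting the same recursion gives $S_{i}e_{n}=v_{i}b^{n}$ for all $n\in\Z$, but nothing forces $v_{i}=S_{i}e_{0}$ into $\mathcal{D}$, which is precisely why the paper needs the Hilbert-module machinery of Theorem \ref{Thm:Solution 1} there.
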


\begin{proof}Theorem \ref{Thm: Main1} asserts that if $R$ is a
Cuntz family in $B(H^{2}(\T))$ of the indicated form, then $\alpha_{R}\circ\tau=\tau\circ\beta$.
For the converse, suppose $R:=\{R_{i}\}_{i=1}^{N}$ is a Cuntz family
in $B(H^{2}(\T))$ so that $\alpha_{R}\circ\tau=\tau\circ\beta$.
Then, as we saw in Corollary \ref{Cor:Extend to Toeplitz Algebra},
$\alpha_{R}$ leaves the Toeplitz algebra $\mathfrak{T}$ invariant.
Choose any orthonormal basis $\{v_{i}\}_{i=1}^{N}$ for $\mathcal{D}$
and let $\widetilde{R}=\{\widetilde{R}_{i}\}_{i=1}^{N}$ be corresponding Cuntz
family on $H^{2}(\T)$ obtained from Theorem \ref{Thm: Main1}.
Then the equation $\alpha_{\widetilde{R}}\circ\tau=\tau\circ\beta$
is also satisfied, by Theorem \ref{Thm: Main1}. It follows that $\alpha_{R}$
and $\alpha_{\widetilde{R}}$ agree on $\mathfrak{T}$. Since $\mathfrak{T}$
is ultraweakly dense in $B(H^{2}(\T))$ (because $\mathfrak{T}$ contains
$\mathfrak{K}$) and since $\alpha_{R}$ and $\alpha_{\widetilde{R}}$
are ultraweakly continuous maps of $B(H^{2}(\T)$), $\alpha_{R}=\alpha_{\widetilde{R}}$
on all of $B(H^{2}(\T))$. Thus by \cite[Proposition 2.2]{mL93},
there is a unitary $N\times N$ \emph{scalar} matrix $\left(u_{ij}\right)$
such that $R_{i}=\sum_{j}u_{ij}\widetilde{R}_{j}$. But $\{(\sum_{j}u_{ij}v_{j})\}_{i=1}^{N}$
is also an orthonormal basis of $\mathcal{D}$,
and so the $R_{i}$'s have the desired form.\end{proof}

\begin{remark}\label{Canonical Basis} It was previously remarked that the
nonconstant inner function $b$
is a finite Blaschke product if and only if the space
$\mathcal{D}=H^{2}(\mathbb{T})\ominus\pi(b)H^{2}(\mathbb{T})$
has finite dimension. In fact, if $b$ is a finite Blaschke product, then
$\mathcal{D}$ has dimension equal to the number of zeros of $b$ and its
elements
are rational functions with poles located in a finite set outside the closed
unit disc. This may be seen by writing
\begin{equation}
b(z)=\prod_{j=1}^{N}b_{\alpha_{j}},\label{eq:Blaschke Product}\end{equation}
 where the $\alpha_{i}$ are the not-necessarily-distinct zeros of
$b$.  One can check that the functions $\{w_{i}\}_{i=1}^{N}$ constructed from
partial products of $b$ by way of
$$
w_{j}(z)=\frac{(1-\vert\alpha_{j}\vert^{2})^{1/2}}{1-\overline{\alpha_{j}}z}\prod_{k=1}^{j-1}b_{\alpha_{k}}, \qquad 1 \leq j \leq N,
$$
(the product $\prod_{k=1}^{j-1}b_{\alpha_{k}}$ is interpreted as $1$ when
$j=1$), form an orthonormal basis for $\mathcal{D}$ (see \cite[p. 305]{jW56}). 
We call this the \emph{canonical} orthonormal basis for $\mathcal{D}$.  Note
that the elements of the canonical basis are nonzero on $\T$ and hence
invertible elements of $C(\T)$.  The analysis in \cite{jW56} shows that if $b$
is not a finite Blaschke product, then $\mathcal{D}$ is infinite dimensional.
Alternatively, one may use the simple corollary of Beurling's theorem that
asserts that $\pi(\theta_1)H^2(\T) \subseteq \pi(\theta_2)H^2(\T)$ if and only
if the quotient $\theta_1/\theta_2$ is an inner function. (See \cite[page
11 ff.]{hH64}.) The point from this perspective is: if $b$ is not a finite
Blaschke product, then $b$ has infinitely many inner factors, say
$b=\Pi_{n=1}^{\infty} b_n$, and from these, one can construct an infinite
increasing sequence of closed subspaces of $\mathcal{D}$.
\end{remark}

To identify all the solutions to equation~\eqref{eq:cov1} in Problem
\ref{Problem: Central problem}, we need to restrict attention to
finite Blaschke products. For this reason and to get a clearer picture
of the Cuntz isometries implementing $\alpha$ and $\alpha_{+}$ we
emphasize: 
\begin{quotation}
\emph{From now on, $b$ will denote a} \textbf{finite} \emph{Blaschke
product.} 
\end{quotation}

In \cite[Theorem 1]{jR66}, Ryff shows that if $\varphi$ is analytic on the disc $\mathbb{D}$ and maps $\mathbb{D}$
into $\mathbb{D}$, then composition with $\varphi$ induces a bounded
operator on all the $H^{p}$ spaces. The the principal ingredient
in his proof is Littlewood's subordination theorem. In \cite[Theorem 3]{jR66}, Ryff shows further that composition with $\varphi$ is an isometry on $H^{p}$ if and only if $\varphi$ is an inner function
that vanishes at the origin. The following consequence of Theorem \ref{Thm: Main1} is a variation on this theme with a very
elementary proof.

\begin{corollary}\label{cor: boundedness gamma-b}Let $b$ be a finite
Blaschke product and define $\Gamma_{b}$ on trigonometric polynomials
$p$ by $\Gamma_{b}(p):=p\circ b$. Then $\Gamma_{b}$ extends in a unique way to
a bounded operator on $L^{2}(\mathbb{T})$ that leaves $H^{2}(\mathbb{T})$
invariant.  

Moreover, letting $\Gamma_{b}$ now denote the extension, the following are equivalent:
\begin{enumerate}
\item\label{gammaiso} $\Gamma_b$ is an isometry.
\item\label{bzero} $b(0)=0$.
\item\label{gammareduce} $\Gamma_b$ is reduced by $H^2(\T)$.
\end{enumerate}
\end{corollary}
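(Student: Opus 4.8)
The plan is to route everything through a single inner-product computation, namely the values $(b^n,b^m)=\int_\T b^{\,n-m}\,dm$, and to read off boundedness and all three equivalences from it. The main work is the boundedness estimate; once that is in place the rest is short.

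First I would produce the bounded extension. Since $\Gamma_b$ is initially defined only on trigonometric polynomials, which are dense in $L^2(\T)$, it suffices to prove a uniform bound $\|\Gamma_b p\|_2\le C\|p\|_2$; the extension is then unique by continuity. For a trigonometric polynomial $p$ one has $|\Gamma_b p|^2=|p\circ b|^2=|p|^2\circ b$, so
\[
\|\Gamma_b p\|_2^2=\int_\T (|p|^2\circ b)\,dm=\int_\T |p|^2\,d(b_\ast m).
\]
The key elementary fact is that the pushforward $b_\ast m$ is absolutely continuous with bounded density. Testing against $e_k$ and using the mean-value property of bounded analytic functions gives $\int_\T e_k\circ b\,dm=\int_\T b^k\,dm=b(0)^k$ for $k\ge 0$ (and the conjugate value for $k<0$), and these are precisely the Fourier–Poisson coefficients of the point $b(0)$. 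Hence $b_\ast m=\kappa_{b(0)}\,dm$, where $\kappa_{b(0)}$ denotes the Poisson kernel at $b(0)$, so that
\[
\|\Gamma_b p\|_2^2=\int_\T |p|^2\,\kappa_{b(0)}\,dm\le \|\kappa_{b(0)}\|_\infty\,\|p\|_2^2,
\]
with $\|\kappa_{b(0)}\|_\infty=(1+|b(0)|)/(1-|b(0)|)<\infty$ since $b$ is nonconstant. This is the only place where more than Hilbert-space bookkeeping is used; equivalently one verifies, from the computation of $(b^n,b^m)$ above, that $\Gamma_b^\ast\Gamma_b$ is the bi-infinite Toeplitz (Laurent) operator with symbol $\kappa_{b(0)}$. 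Invariance of $H^2(\T)$ is then immediate: for an analytic polynomial $p=\sum_{n\ge 0}c_n e_n$ we have $\Gamma_b p=\sum_{n\ge 0}c_n b^n$, a finite sum of functions $b^n\in H^\infty(\T)\subseteq H^2(\T)$, and density of analytic polynomials in $H^2(\T)$ together with boundedness of $\Gamma_b$ gives $\Gamma_b(H^2(\T))\subseteq H^2(\T)$.

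For the equivalences, the statement that $\Gamma_b$ is an isometry is equivalent to $b(0)=0$ directly from the displayed identity: $\Gamma_b$ is isometric iff $\int_\T|p|^2(\kappa_{b(0)}-1)\,dm=0$ for all trigonometric polynomials $p$, and since by polarization the linear span of the functions $|p|^2$ contains every $e_k$, this forces all Fourier coefficients of $\kappa_{b(0)}-1$ to vanish, i.e. $\kappa_{b(0)}=1$ a.e., i.e. $b(0)=0$. (More cheaply, if $\Gamma_b$ is an isometry then $(b,1)=(\Gamma_b e_1,\Gamma_b e_0)=(e_1,e_0)=0$, so $b(0)=\int_\T b\,dm=0$.)

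It remains to tie in the reducing condition, and here I would invoke Theorem~\ref{Thm: Main1} as advertised. If $b(0)=0$, then for $f\in H^2(\T)$ one computes $(1,\pi(b)f)=\overline{b(0)f(0)}=0$, so the constant function $1$ lies in $\mathcal{D}=H^2(\T)\ominus\pi(b)H^2(\T)$; completing $v_1:=1$ to an orthonormal basis of $\mathcal{D}$ and forming the Cuntz family of Theorem~\ref{Thm: Main1}, one finds $S_1 e_n=1\cdot b^n=b^n=\Gamma_b e_n$, whence $\Gamma_b=S_1$, and Theorem~\ref{Thm: Main1} guarantees that $S_1$ is reduced by $H^2(\T)$. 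Conversely, since $H^2(\T)$ is always invariant, $\Gamma_b$ being reduced by $H^2(\T)$ forces $H^2(\T)^\perp$ to be invariant, so $\Gamma_b e_{-1}=b^{-1}=\overline{b}$ must lie in $H^2(\T)^\perp$; but $(\overline{b},1)=\int_\T\overline{b}\,dm=\overline{b(0)}$, so $b(0)=0$. This closes the loop. The one genuine obstacle is the boundedness step: identifying the pushforward $b_\ast m$ (equivalently, recognizing the Gram matrix of $\{b^n\}$ as the Poisson kernel), after which isometry, $H^2(\T)$-invariance, and the reducing property all follow in a few lines.
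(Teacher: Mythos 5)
Your proposal is correct, and it takes a genuinely different route from the paper's in the two substantive steps. For boundedness, the paper never touches the pushforward measure: it fixes an element $w$ of the canonical basis of $\mathcal{D}=H^2(\T)\ominus\pi(b)H^2(\T)$, which by Remark \ref{Canonical Basis} is invertible in $C(\T)$, takes the isometry $S$ of Theorem \ref{Thm: Main1} with $S(e_n)=wb^n$, and writes $\Gamma_b=\pi(w^{-1})S$, so boundedness is inherited from the Cuntz isometry. Your identification $b_\ast m=\kappa_{b(0)}\,dm$ (harmonic measure at $b(0)$) is the classical alternative; it buys strictly more, namely the exact identity $\Gamma_b^\ast\Gamma_b=\pi(\kappa_{b(0)})$ and hence the precise norm $\Vert\Gamma_b\Vert^2=(1+|b(0)|)/(1-|b(0)|)$, and since it nowhere uses invertibility of canonical basis elements it would extend verbatim to an arbitrary inner function, whereas the paper's factorization through $\pi(w^{-1})$ is special to finite Blaschke products. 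Your treatment of the isometry equivalence essentially coincides with the paper's (your ``cheap'' computation $b(0)=(\Gamma_b e_1,\Gamma_b e_0)=0$ is verbatim the paper's converse; for the direct implication the paper checks that $\{b^n\}_{n\in\Z}$ is orthonormal by factoring $b=zb_1$, while you read it off from $\kappa_0=1$). The real divergence is in the equivalence of \eqref{bzero} and \eqref{gammareduce}: the paper works on the adjoint side, proving the auxiliary assertion that if $\xi b\in H^2(\T)$ then $\Gamma_b^\ast\xi\in H^2(\T)$ if and only if $(\xi b)(0)=0$, and deducing that $\Gamma_b^\ast$ preserves $H^2(\T)$ exactly when $b(0)=0$; that assertion is deliberately formulated for reuse, since Proposition \ref{Reducing and implementing L} invokes it to show that the transfer operator $\Lb$ leaves $H^2(\T)$ invariant (McDonald's lemma). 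Your route --- observing that $b(0)=0$ puts the constant $\mathbf{1}$ in $\mathcal{D}$, so that $\Gamma_b$ coincides with a Cuntz isometry $S_1$ from Theorem \ref{Thm: Main1} and is therefore reduced by $H^2(\T)$, and conversely extracting $\overline{b(0)}=(\overline{b},1)=0$ from forward invariance of $H^2(\T)^\perp$ applied to $e_{-1}$ --- is shorter and avoids the adjoint computation entirely, at the mild cost of not producing the lemma the paper needs later; it is also not circular, since Theorem \ref{Thm: Main1} precedes this corollary and the paper itself leans on that theorem here.
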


\begin{proof} Fix an element $w$ of the canonical basis for $\mathcal{D} = H^2(\T) \ominus \pi(b) H^2(\T)$.  By Theorem~\ref{Thm: Main1} there is a unique isometry $S$ on $L^2(\T)$ satisfying
\begin{equation}\label{cuntzprop}
S(e_n) = w b^n, \qquad n \in \Z.
\end{equation}
As observed in Remark~\ref{Canonical Basis}, $w$ is an invertible element of $C(\T)$, so the operator $\pi(w)$ is invertible.  The relation \eqref{cuntzprop} then implies that for any trigonometric polynomial $p$ we have
$$
\pi(w^{-1}) S (p) = \Gamma_b (p),
$$
so the bounded operator $\pi(w^{-1}) S$ is an extension of $\Gamma_b$ to all of $L^2(\T)$.  Uniqueness of the extension follows from the density of the trigonometric polynomials in $L^2(\T)$.  The fact that $\Gamma_b(e_n) = b^n$ is in $H^2(\T)$ for every $n \geq 0$ implies that this extension leaves $H^2(\T)$ invariant.  

If $b(0)=0$, then $b(z)=zb_{1}(z)$, where $b_{1}$ is in $H^2(\T)$.  It follows that for any $n > m$ we have $(b^{n},b^{m})=(z^{n-m}b_{1}^{n-m},1)=0$, so that the family $\{b^n\}_{n \in \Z}$ is orthonormal.  Since $\Gamma_b(e_{n})=b^n$ for all $n \in \Z$, we conclude that $\Gamma_{b}$ is an isometry.   Conversely, if $\Gamma_{b}$ is an isometry,
$$
b(0)=(b,e_{0})=(\Gamma_{b}(e_{1}),\Gamma_{b}(e_{0}))=(e_{1},e_{0})=0.
$$
This establishes the equivalence of \eqref{gammaiso} and \eqref{bzero}.  

It will be useful later to deduce the equivalence of \eqref{bzero} and \eqref{gammareduce} from the assertion that if a vector $\xi \in L^2(\T)$ has the property that the pointwise product $\xi b$ is in $H^2(\T)$, then $\Gamma_b^* \xi$ is in $H^2(\T)$ if and only if $(\xi b)(0) = 0$.  To prove this assertion, note that $\Gamma_b^* \xi$ is in $H^2(\T)$ if and only if $(\Gamma_b^* \xi, z^{-n}) = 0$ for all $n > 0$, and this is equivalent to 
$$
0 = (\xi, \Gamma_b(z^{-n})) = (\xi, b^{-n}) = (\xi b^n, 1) = ((\xi b) b^{n-1}, 1) = (\xi b)(0) b^{n-1}(0), \qquad n > 0,
$$
which is equivalent to $(\xi b)(0) = 0$.  It follows from this assertion that $\Gamma_b^* \xi \in H^2(\T)$ for all $\xi \in H^2(\T)$ if and only if $b(0) = 0$.
\end{proof}

All of our proofs to this point have used only elementary operator theory.  To go further, we require more detailed
information about finite Blaschke products.

\section{The Master Isometry}

The zeros of our finite Blaschke product $b$ will be written $\alpha_{1},\alpha_{2},\ldots,\alpha_{N}$,
and we abbreviate $b_{\alpha_{j}}$ by $b_{j}$.  As it was in Corollary~\ref{cor: boundedness gamma-b}, the bounded operator of composition by $b$ on $L^2(\T)$ is denoted $\Gamma_b$.

Although all Cuntz families $\{S_{i}\}_{i=1}^{N}$ that we constructed
in Theorem \ref{Thm: Main1} are closely linked to $\Gamma_{b}$,
$\Gamma_{b}$ is not quite the operator we want to work with. It turns
out that there is a single \emph{isometry} $C_{b}$, 
built canonically from $\Gamma_{b}$, that has the property that
\emph{every} Cuntz family $S=\{S_{i}\}_{i=1}^{N}$ satisfying equation~
\eqref{eq:Cuntz1a} can be expressed in terms of $C_{b}$.
More remarkably, $C_{b}$ is \emph{reduced} by $H^{2}(\T)$. We call
this isometry the \emph{master isometry determined by} $b$ (or by
the endomorphism $\beta$ induced by $b$.)

Much of the material below is contained in results
already in the literature (see in particular \cite{jMcD03} and \cite{HW08}).
But many calculations are done under the additional hypothesis
that $b(0)=0$, which we want specifically to avoid. In the interest
of keeping our treatment self-contained, we present all of the details.

\begin{lemma}\label{Lem: Postive} Define \begin{equation}
J_{0}(z):=\frac{b^{\prime}(z)z}{Nb(z)}.\label{eq:J-zero}\end{equation}
 Then the restriction of $J_{0}$ to $\mathbb{T}$ is a positive continuous
function and, in particular, is bounded away from zero. \end{lemma}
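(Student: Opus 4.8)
The plan is to exploit the factorization of $b$ through the logarithmic derivative. Writing $b=\prod_{j=1}^{N}b_{j}$ with $b_{j}=b_{\alpha_{j}}$, the relation $b'/b=\sum_{j=1}^{N}b_{j}'/b_{j}$ gives
\[
J_{0}(z)=\frac{zb'(z)}{Nb(z)}=\frac{1}{N}\sum_{j=1}^{N}\frac{zb_{j}'(z)}{b_{j}(z)} .
\]
Thus it is enough to analyze a single factor $zb_{\alpha}'(z)/b_{\alpha}(z)$ on $\T$: if each such term is positive and continuous there, then so is their average $J_{0}|_{\T}$, and the desired properties will follow.

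For a single factor I would simply differentiate. Writing $b_{\alpha}(z)=c\,(\alpha-z)/(1-\overline{\alpha}z)$ with $c=|\alpha|/\alpha$ unimodular (so that $c$ drops out of the logarithmic derivative), the quotient rule gives $b_{\alpha}'(z)=-c\,(1-|\alpha|^{2})/(1-\overline{\alpha}z)^{2}$, whence
\[
\frac{zb_{\alpha}'(z)}{b_{\alpha}(z)}=\frac{z(1-|\alpha|^{2})}{(1-\overline{\alpha}z)(z-\alpha)} .
\]
Next I would restrict to $z\in\T$ and use $\overline{z}=z^{-1}$ to rewrite the denominator. Since $|1-\overline{\alpha}z|^{2}=(1-\overline{\alpha}z)(1-\alpha\overline{z})=(1-\overline{\alpha}z)(z-\alpha)/z$ on $\T$, we have $(1-\overline{\alpha}z)(z-\alpha)=z\,|1-\overline{\alpha}z|^{2}$, and the factor of $z$ cancels to yield the Poisson kernel
\[
\frac{zb_{\alpha}'(z)}{b_{\alpha}(z)}=\frac{1-|\alpha|^{2}}{|1-\overline{\alpha}z|^{2}},\qquad z\in\T
\]
(the degenerate case $\alpha=0$, where $b_{0}(z)=z$, gives the factor $1$ directly, consistent with this formula).

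The conclusion is then immediate. For each $j$, since $|\alpha_{j}|<1$ the numerator $1-|\alpha_{j}|^{2}$ is strictly positive, while the denominator $|1-\overline{\alpha_{j}}z|^{2}$ is continuous and never vanishes on $\T$ (a zero would force $z=1/\overline{\alpha_{j}}$, of modulus greater than $1$). Hence each summand is strictly positive and continuous on $\T$, so
\[
J_{0}(z)=\frac{1}{N}\sum_{j=1}^{N}\frac{1-|\alpha_{j}|^{2}}{|1-\overline{\alpha_{j}}z|^{2}},\qquad z\in\T,
\]
is a finite average of strictly positive continuous functions, and is therefore positive and continuous; as a positive continuous function on the compact set $\T$ it attains its minimum, which is positive, so it is bounded away from zero. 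The only actual computation is the single-factor simplification to the Poisson kernel in the second step, and since it is entirely elementary I anticipate no genuine obstacle.
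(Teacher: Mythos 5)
Your proof is correct and takes essentially the same route as the paper's: both write $J_{0}(z)=\frac{1}{N}\sum_{j=1}^{N}\frac{zb_{j}'(z)}{b_{j}(z)}$ via the logarithmic derivative and identify each summand on $\T$ with the Poisson kernel $\frac{1-|\alpha_{j}|^{2}}{|1-\overline{\alpha_{j}}z|^{2}}$ (the paper writes it equivalently as $\frac{1-|\alpha_{j}|^{2}}{|\alpha_{j}-z|^{2}}$), which is strictly positive and continuous on $\T$. The only difference is that you spell out the quotient-rule computation, the degenerate case $\alpha_{j}=0$, and the compactness argument for the positive lower bound, all of which the paper leaves implicit.
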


\begin{proof} Of course, $J_{0}$ is a rational function. What needs
proof is that on $\T$, $J_{0}$ is positive, non-vanishing and has
no poles. If $\alpha_{j}\neq0$, then \[
\frac{b_{j}'(z)}{b_{j}(z)}=\frac{1}{z}\frac{1-|\alpha_{j}|^{2}}{|\alpha_{j}-z|^{2}},\]
 while if $\alpha_{j}=0$, $\frac{b_{j}'}{b_{j}}(z)=\frac{1}{z}$.
In either case, $\frac{zb_{j}'(z)}{b_{j}(z)}$ is strictly positive
on $\T$. A short calculation shows that $J_{0}(z)=\frac{1}{N}\sum_{j=1}^{N}\frac{zb_{j}^{'}(z)}{b_{j}(z)}$, so the result follows.\end{proof}

The next lemma follows \cite[Lemma 1]{jMcD03} closely.

\begin{lemma} \label{Lem: Local homeo}There is an increasing homeomorphism
$\theta:[0,2\pi]\to[\theta(0),\theta(0)+N\cdot2\pi]$, where $e^{i\theta(0)}=b(1)$,
such that 
\begin{enumerate}
\item $b(e^{it})=e^{i\theta(t)}$. 
\item The derivative of $\theta$ on $(0,2\pi)$ is $\frac{b'(e^{it})}{b(e^{it})}e^{it}\gneq0$. 
\item If $(t_{j-1},t_{j})=\theta^{-1}(\theta(0)+(j-1)\cdot2\pi,\theta(0)+j\cdot2\pi)$,
$j=1,2,\ldots,N$, and if $A_{j}:=\{e^{it}\mid t_{j-1}<t<t_{j}\}$,
then $\cup_{j=1}^{N}A_{j}=\T$, except for a finite set of points,
and $b$ maps each $A_{j}$ diffeomorphically onto $\mathbb{T}\backslash\{b(1)\}$. 
\item If $\sigma_{j}:\T\backslash\{b(1)\}\to A_{j}$ denotes the inverse
of the restriction of $b$ to $A_{j}$, then as $s$ ranges over $(\theta(0)+2\pi(j-1),\theta(0)+2\pi j)$,
$e^{is}$ ranges over $\T\backslash\{b(1)\}$ and \[
\sigma_{j}(e^{is})=e^{i\theta^{-1}(s)}.\]

\end{enumerate}
\end{lemma}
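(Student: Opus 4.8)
The plan is to construct $\theta$ explicitly by integrating the logarithmic derivative of $b$ around the circle, which is exactly the quantity that Lemma~\ref{Lem: Postive} controls. I would choose a real number $\theta(0)$ with $e^{i\theta(0)} = b(1)$ (possible since $|b(1)| = 1$) and define
\[
\theta(t) := \theta(0) + \int_0^t \frac{b'(e^{is}) e^{is}}{b(e^{is})}\,ds, \qquad t \in [0, 2\pi].
\]
The integrand is precisely $N J_0(e^{is})$ in the notation of Lemma~\ref{Lem: Postive}, and that lemma tells us it is a positive continuous function bounded away from zero; in particular it is real-valued, so $\theta$ is a real, continuously differentiable, strictly increasing function with derivative $\theta'(t) = \tfrac{b'(e^{it}) e^{it}}{b(e^{it})} \gneq 0$ on $(0,2\pi)$. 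This yields assertion (2) at once and shows $\theta$ is an increasing homeomorphism onto its image.

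Next I would verify assertion (1), the identity $b(e^{it}) = e^{i\theta(t)}$. The clean route is to set $g(t) := b(e^{it}) e^{-i\theta(t)}$ and differentiate: using the formula for $\theta'$ one finds $g'(t) \equiv 0$, and since $g(0) = b(1) e^{-i\theta(0)} = 1$ by the choice of $\theta(0)$, we conclude $g \equiv 1$, which is (1). To pin down the range of $\theta$ I would compute the total increment $\theta(2\pi) - \theta(0) = \int_0^{2\pi} \tfrac{b'(e^{is}) e^{is}}{b(e^{is})}\,ds$. Parametrizing $z = e^{is}$ rewrites this as $\tfrac{1}{i}\int_{\mathbb{T}} \tfrac{b'(z)}{b(z)}\,dz = 2\pi \cdot \tfrac{1}{2\pi i}\int_{\mathbb{T}} \tfrac{b'(z)}{b(z)}\,dz$, which by the argument principle is $2\pi$ times the number of zeros of $b$ inside $\mathbb{D}$, counted with multiplicity; there are no poles inside $\mathbb{D}$, as the poles of $b$ sit at the points $1/\overline{\alpha_j}$ outside the closed disc. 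Since $b$ has exactly $N$ zeros in $\mathbb{D}$, the increment is $2\pi N$, so $\theta$ is an increasing homeomorphism of $[0,2\pi]$ onto $[\theta(0), \theta(0) + 2\pi N]$.

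With $\theta$ in hand, assertions (3) and (4) are essentially bookkeeping. Setting $t_j := \theta^{-1}(\theta(0) + 2\pi j)$ for $0 \leq j \leq N$ (so $t_0 = 0$ and $t_N = 2\pi$) and $A_j := \{e^{it} : t_{j-1} < t < t_j\}$, I observe that the arcs $A_1, \dots, A_N$ cover $\mathbb{T}$ except for the finite set $\{e^{it_j}\}_{j=0}^N$. As $t$ runs over $(t_{j-1}, t_j)$, $\theta(t)$ runs over the length-$2\pi$ interval $(\theta(0) + 2\pi(j-1), \theta(0) + 2\pi j)$, so by (1) the point $b(e^{it}) = e^{i\theta(t)}$ traverses $\mathbb{T} \setminus \{b(1)\}$ exactly once (the two endpoints both map to $b(1)$); since $\theta'$ never vanishes, $b|_{A_j}$ is a diffeomorphism onto $\mathbb{T} \setminus \{b(1)\}$, giving (3). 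Finally, for $s \in (\theta(0) + 2\pi(j-1), \theta(0) + 2\pi j)$ the value $\theta^{-1}(s)$ lies in $(t_{j-1}, t_j)$, so $e^{i\theta^{-1}(s)} \in A_j$ and $b(e^{i\theta^{-1}(s)}) = e^{i\theta(\theta^{-1}(s))} = e^{is}$ by (1); hence $\sigma_j(e^{is}) = e^{i\theta^{-1}(s)}$, which is (4).

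The one genuinely substantive step is the increment computation $\theta(2\pi) - \theta(0) = 2\pi N$: everything else is either the direct payoff of Lemma~\ref{Lem: Postive} or a routine verification, but this step is where the degree of $b$ enters, via the argument principle. The only mild subtlety I would be careful about is confirming that the integrand is real (guaranteed by Lemma~\ref{Lem: Postive}), so that the explicitly defined $\theta$ is a bona fide real phase function and no separate covering-space lifting argument is needed.
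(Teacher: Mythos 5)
Your proof is correct, and it reaches the lemma by a genuinely different mechanism than the paper does. The paper's proof exploits the factorization $b=\prod_{j=1}^{N}b_{\alpha_j}$: for each elementary factor it chooses an analytic branch of $\log$ on the plane slit along the ray through $0$ and $b_j(1)$, producing a smooth lift $\theta_j$ with $b_j(e^{it})=e^{i\theta_j(t)}$; strict positivity of $\theta_j'$ (the factor-by-factor content of Lemma~\ref{Lem: Postive}) makes each $\theta_j$ an increasing homeomorphism of $[0,2\pi]$ onto an interval of length exactly $2\pi$, since each M\"obius factor visibly wraps $\T$ once, and then $\theta:=\sum_j\theta_j$ has total increment $N\cdot 2\pi$ with no winding-number computation at all. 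You instead define $\theta$ in one stroke as the integral of the logarithmic derivative of the full product, obtain the lifting identity (1) by the $g(t)=b(e^{it})e^{-i\theta(t)}$, $g'\equiv 0$ argument, and extract the increment $2\pi N$ from the argument principle. The two constructions yield the same function (the paper's lift satisfies $\theta'=\sum_j e^{it}b_j'/b_j=e^{it}b'/b$, matching your integrand), but the trade-off is real: your route uses only Lemma~\ref{Lem: Postive} plus standard complex analysis and would apply verbatim to any function analytic in a neighborhood of $\overline{\D}$, unimodular on $\T$, with positive $zb'(z)/b(z)$ there, whereas the paper's route avoids the argument principle entirely at the cost of leaning on the explicit product structure to make the count $N$ elementary. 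Your bookkeeping for assertions (3) and (4) is sound and spells out what the paper dismisses as ``now clear.'' One trivial imprecision: when $\alpha_j=0$ the factor $b_0(z)=z$ has no finite pole, so the poles of $b$ form a subset of $\{1/\overline{\alpha_j}:\alpha_j\neq 0\}$; in every case $b$ is pole-free on the closed disc, which is all your argument-principle step requires.
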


\begin{proof} Each $b_{j}$ is analytic in a neighborhood of the
closed unit disc and maps $\mathbb{T}$ homeomorphically onto $\mathbb{T}$
in an orientation preserving fashion. If the plane is slit along the
ray through the origin and $b_{j}(1)$, then one can define an analytic
branch of $\log z$ in the resulting region. On $\mathbb{T}\backslash\{b_{j}(1)\}$,
$\log b_{j}(e^{it})=i\theta_{j}(t)$ for a smooth function $\theta_{j}(t)$
defined initially on $(0,2\pi)$, and mapping to $(\theta_{j}(0),\theta_{j}(0)+2\pi)$.
Further, if one differentiates the defining equation for $\theta_{j}$,
one finds that $i\theta_{j}'(t)=\frac{b_{j}'(e^{it})}{b_{j}(e^{it})}e^{it}i$,
so $\theta_{j}^{'}$ is strictly positive, as was shown in the preceding
lemma. Hence $\theta_{j}$ is strictly increasing. Since $b_{j}(e^{i0})=b_{j}(1)=b_{j}(e^{i2\pi})$,
$\theta_{j}$ extends to a homeomorphism from $[0,2\pi]$ \emph{onto}
$[\theta_{j}(0),\theta_{j}(0)+2\pi]$. If $\theta$ is defined on
$[0,2\pi]$ by the formula $\theta(t):=\sum_{j=1}^{N}\theta_{j}(t)$,
then $\theta$ is a strictly increasing homeomorphism from $[0,2\pi]$
onto $[\theta(0),\theta(0)+N\cdot2\pi]$ such that $b(e^{it})=e^{i\theta(t)}$.
The remaining assertions are now clear. \end{proof}

\begin{definition}\label{definition: canonical transfer operator}
The \emph{(canonical) transfer operator} determined by the Blaschke
product $b$ is defined on measurable functions $\xi$ by the formula

\[
\Lb(\xi)(z):=\frac{1}{N}\sum_{b(w)=z}\xi(w).\]
 \end{definition}

Of course, an alternate formula for $\Lb$ is $\Lb(\xi)(z)=\frac{1}{N}\sum_{j=1}^{N}\xi(\sigma_{j}(z))$,
when $z\in\T\backslash\{b(1)\}$. It is clear that $\Lb$ carries
measurable functions to measurable functions, preserves order, and
is unital. Because $b$ is a local homeomorphism, $\Lb$ carries $C(\T)$
into itself. It is not difficult to see that $\Lb$ is a bounded linear
operator on $L^{2}(\T)$. However, we present a proof of this that connects
$\Lb$ with the adjoint of $\Gamma_{b}$. For this purpose, note that
by Lemma \ref{Lem: Postive}, $\pi(J_{0})$ is a bounded, positive,
invertible operator on $L^{2}(\T)$ with inverse $\pi(J_{0}^{-1})$.

\begin{theorem}\label{Theorem: transfer}\begin{equation}
\Lb\pi(J_{0})^{-1}=\Gamma_{b}^{*}\label{eq:Transfer}\end{equation}
 \end{theorem}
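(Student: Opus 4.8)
The plan is to verify the equivalent adjoint relation $\Gamma_b^{*}=\Lb\,\pi(J_0)^{-1}$ by testing it against a dense family of vectors. Since $\pi(J_0)$ is bounded, positive and invertible by Lemma~\ref{Lem: Postive}, and since $\Gamma_b$ is a bounded operator extending $p\mapsto p\circ b$ by Corollary~\ref{cor: boundedness gamma-b}, it suffices to prove the sesquilinear identity
\[
(\Gamma_b \xi, \eta) = (\xi, \Lb\,\pi(J_0)^{-1}\eta), \qquad \xi,\eta \in C(\T).
\]
Indeed, granting this, fix $\eta\in C(\T)$ and let $\xi$ range over the dense subspace $C(\T)$: we obtain $\Gamma_b^{*}\eta = \Lb\,\pi(J_0)^{-1}\eta$ on $C(\T)$. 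As the right-hand side there agrees with the bounded operator $\Gamma_b^{*}$, the transfer operator extends to the bounded operator $\Gamma_b^{*}\pi(J_0)$ on all of $L^2(\T)$ and equation~\eqref{eq:Transfer} follows; in particular, this is where the boundedness of $\Lb$ comes out. (For $\xi\in C(\T)$ one has $\Gamma_b\xi=\xi\circ b$ by uniformly approximating $\xi$ by trigonometric polynomials and using that $b$ maps $\T$ into $\T$.)

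To establish the identity I would write the left side as $(\Gamma_b\xi,\eta)=\frac{1}{2\pi}\int_0^{2\pi}\xi\bigl(b(e^{it})\bigr)\overline{\eta(e^{it})}\,dt$ and break the integral over the arcs $A_1,\dots,A_N$ of Lemma~\ref{Lem: Local homeo}, whose union is $\T$ off a finite (hence null) set. On $A_j$ I substitute $s=\theta(t)$, using $b(e^{it})=e^{i\theta(t)}$ and the fact that, combining Lemma~\ref{Lem: Local homeo}(2) with the formula $J_0(z)=\frac{1}{N}\sum_{j=1}^{N}\frac{z\,b_j'(z)}{b_j(z)}$ from the proof of Lemma~\ref{Lem: Postive}, the Jacobian is $\theta'(t)=\frac{b'(e^{it})e^{it}}{b(e^{it})}=N\,J_0(e^{it})$. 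Writing $w=e^{is}$, which ranges once over $\T\setminus\{b(1)\}$ as $s$ ranges over $(\theta(0)+2\pi(j-1),\,\theta(0)+2\pi j)$, and using $e^{it}=\sigma_j(w)$ from Lemma~\ref{Lem: Local homeo}(4), each arc contributes
\[
\int_\T \xi(w)\,\overline{\eta(\sigma_j(w))}\,\frac{1}{N\,J_0(\sigma_j(w))}\,dm(w).
\]

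Summing over $j$ and invoking the alternate formula $\Lb(\zeta)(w)=\frac{1}{N}\sum_{j=1}^{N}\zeta(\sigma_j(w))$ for the transfer operator, the total equals $\int_\T \xi(w)\,\overline{\bigl(\Lb(J_0^{-1}\eta)\bigr)(w)}\,dm(w)$, where I use that $J_0$ is real-valued on $\T$ (again Lemma~\ref{Lem: Postive}) to pull the conjugate past $J_0^{-1}$. Since $\pi(J_0)^{-1}\eta=J_0^{-1}\eta$, this is precisely $(\xi,\Lb\,\pi(J_0)^{-1}\eta)$, which finishes the identity. The only genuine obstacle is the bookkeeping in the change of variables — matching the branches $\sigma_j$ to the successive length-$2\pi$ intervals and confirming that the derivative of $\theta$ is exactly $N\,J_0$ on $\T$ — together with the minor observation that the exceptional finite set on which the $A_j$ fail to cover $\T$ is null and so affects none of the integrals.
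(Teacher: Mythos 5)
Your proposal is correct and takes essentially the same approach as the paper: the same decomposition of the integral over the arcs $A_j$ of Lemma~\ref{Lem: Local homeo}, the same substitution $s=\theta(t)$ with the Jacobian identity $\theta'(t)=N J_0(e^{it})$, and the same identification of the resulting sum with $\Lb$ --- the only cosmetic differences being that you change variables in $(\Gamma_b\xi,\eta)$ rather than in $(\xi,\Gamma_b\eta)$, and that you test on $C(\T)$ and extend by density, whereas the paper runs the identical computation directly for arbitrary $\xi,\eta\in L^2(\T)$. Working with general $L^2$ functions as the paper does also makes your final extension step unnecessary, and is slightly cleaner on one point your last sentence glosses over: since $\Lb$ is defined pointwise on all measurable functions, the theorem asserts agreement of the pointwise-defined $\Lb\pi(J_0)^{-1}$ with $\Gamma_b^*$ on all of $L^2(\T)$, not merely the existence of a bounded extension from $C(\T)$ (though this is easily patched, e.g.\ by noting the same change of variables shows $\Lb$ is bounded on $L^1(\T)$ and passing to an a.e.-convergent subsequence).
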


\begin{proof}For $\xi$ and $\eta$ in $L^{2}(\T)$, \[
(\Gamma_{b}^{*}\xi,\eta)=(\xi,\Gamma_{b}\eta)=\int_{0}^{2\pi}\xi(z)\overline{\eta(b(z))}\, dm(z)=\sum_{j=1}^{N}\int_{A_{j}}\xi(z)\overline{\eta(b(z))}\, dm(z).\]
 From the first and third assertions of Lemma \ref{Lem: Local homeo},
\[
\int_{A_{j}}\xi(z)\overline{\eta(b(z))}\, dm(z)=\int_{t_{j-1}}^{t_{j}}\xi(e^{it})\overline{\eta(e^{i\theta(t)})}\, dt.\]
Changes the variable to $s=\theta(t)$, the third and fourth assertions of Lemma \ref{Lem: Local homeo} imply
\[
\int_{t_{j-1}}^{t_{j}}\xi(e^{it})\eta(e^{i\theta(t)})\, dt=\int_{\theta(t_{j-1})}^{\theta(t_j)}\xi(e^{i\theta^{-1}(s)})\eta(e^{is})(\theta^{-1})'(s)\, ds.\]
Calculating $(\theta^{-1})'(s)=(\theta'(t))^{-1}=(\theta'(\theta^{-1}(s)))^{-1}$ and using the second assertion of Lemma \ref{Lem: Local homeo}, we deduce 
\[
\int_{\theta(t_{j-1})}^{\theta(t_j)}\xi(e^{i\theta^{-1}(s)})\eta(e^{is})(\theta^{-1})'(s)\, ds=\int_{\theta(t_j)}^{\theta(t_j)}\xi(e^{i\theta^{-1}(s)})\eta(e^{is})\frac{b(e^{i\theta^{-1}(s)})}{b'(e^{i\theta^{-1}(s)})e^{i\theta^{-1}(s)}}\, ds.\]
 But by the fourth statement of Lemma \ref{Lem: Local homeo} $e^{i\theta^{-1}(s)}=\sigma_{j}(e^{is})$,
when $s\in(\theta(0)+2\pi(j-1),\theta(0)+2\pi j)$. So the last integral is 
\[
\int_{\theta(t_{j-1})}^{\theta(t_j)}\xi(\sigma_{j}(e^{is}))\eta(e^{is})\frac{b(\sigma_{j}(e^{is}))}{b'(\sigma_{j}(e^{is}))\sigma_{j}(e^{is})}\, ds.\]
As $e^{is}$ sweeps out $\mathbb{T}\backslash\{b(1)\}$
as $s$ ranges over each interval $(\theta(t_{j-1}), \theta(t_j)) = (\theta(0)+2\pi(j-1),\theta(0)+2\pi j)$,
we conclude that \begin{eqnarray*}
(\Gamma_{b}^{*}\xi,\eta) & = & \sum_{j=1}^{N}\int_{\theta(0)+2\pi(j-1)}^{\theta(0)+2\pi j}\xi(\sigma_{j}(e^{is}))\eta(e^{is})\frac{b(\sigma_{j}(e^{is}))}{b'(\sigma_{j}(e^{is}))\sigma_{j}(e^{is})}\, ds\\
 & = & \frac{1}{N}\sum_{j=1}^{N}\int_{\T}\xi(\sigma_{j}(z))\overline{\eta(z)}\frac{Nb(\sigma_{j}(z))}{b'(\sigma_{j}(z))\sigma_{j}(z)}\, dm(z)\\
 & = & \frac{1}{N}\sum_{j=1}^{N}\int_{\T}\xi(\sigma_{j}(z))(J_{0}(\sigma_{j}(z)))^{-1}\overline{\eta(z)}\, dm(z)\\
 & = & (\Lb(\pi(J_{0})^{-1}\xi),\eta),\end{eqnarray*}
showing that $\Gamma_{b}^{*}=\Lb\pi(J_{0})^{-1}$. \end{proof}

\begin{notation}\label{notation: J}\[
J(z):=\exp\left[\frac{1}{2\pi}\int_{-\pi}^{\pi}\frac{e^{it}+z}{e^{it}-z}\ln(J_{0}(e^{it}))dt\right]\]
 \end{notation} Of course $J$ is the unique outer function that
is positive at $0$ and satisfies the equation $\vert J(z)\vert=J_{0}(z)$
for all $z\in\T$. (See \cite[Theorem 5]{hH64} and the surrounding
discussion.)  Significantly, $J$ does not vanish on $\D$ and $J$ is in $H^{\infty}(\T)$; note that $J_0$ is not even in $H^2(\T)$ except in trivial cases.  We will work primarily with $J^{\frac{1}{2}}$, which
is $\exp\left[\frac{1}{4\pi}\int_{-\pi}^{\pi}\frac{e^{it}+z}{e^{it}-z}\ln(J_{0}(e^{it}))dt\right]$.  Note that $J^{1/2}$ and $J^{-1/2}$ are both in $H^{\infty}(\T)$.

\begin{lemma}\label{lemma:Proto-Conjugate}For all $\varphi\in L^{\infty}(\T)$,\[
\Lb\pi(\varphi)\Gamma_{b}=\pi(\Lb(\varphi)).\]
 In particular, $\Lb$ is a left inverse for $\Gamma_{b}$. \end{lemma}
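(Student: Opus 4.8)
The plan is to prove the identity by a direct pointwise computation, reducing everything to the single algebraic fact that $b\circ\sigma_j=\id$ on $\T\setminus\{b(1)\}$. First I would note that all the operators in sight are genuinely bounded on $L^2(\T)$: $\Gamma_b$ is bounded by Corollary~\ref{cor: boundedness gamma-b}, the transfer operator $\Lb$ is bounded because $\Lb=\Gamma_{b}^{*}\pi(J_{0})$ by Theorem~\ref{Theorem: transfer} and $\pi(J_0)$ is bounded by Lemma~\ref{Lem: Postive}, and $\pi(\Lb(\varphi))$ makes sense because $\Lb$ is unital and order preserving, hence a contraction of $L^\infty(\T)$ into itself. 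Since both sides of the claimed identity are therefore bounded operators on $L^2(\T)$, it suffices to have a pointwise description of $\Gamma_b$ that I can feed into the measure-theoretic formula for $\Lb$.

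The key preliminary step is to record that $\Gamma_b\eta=\eta\circ b$ holds $m$-a.e.\ for every $\eta\in L^2(\T)$, not merely for trigonometric polynomials. This follows from the defining density of the polynomials together with the boundedness of the pullback $\eta\mapsto\eta\circ b$; the latter is exactly what makes the substitution legitimate on equivalence classes, and it rests on the Kametani--Ugaheri null-set fact recorded in Corollary~\ref{Cor:Well defined beta} (so that $\eta\circ b$ depends only on the class of $\eta$). Granting this, the computation is immediate: for $\eta\in L^2(\T)$,
\[
\Lb\big(\pi(\varphi)\Gamma_b\eta\big)(z)=\Lb\big(\varphi\cdot(\eta\circ b)\big)(z)=\frac{1}{N}\sum_{j=1}^{N}\varphi(\sigma_j(z))\,\eta\big(b(\sigma_j(z))\big),
\]
and since $b(\sigma_j(z))=z$ for every $j$, the factor $\eta(z)$ pulls out of the sum to give $\eta(z)\cdot\frac{1}{N}\sum_{j=1}^{N}\varphi(\sigma_j(z))=\eta(z)\,\Lb(\varphi)(z)=\big(\pi(\Lb(\varphi))\eta\big)(z)$. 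This is the desired equality $\Lb\pi(\varphi)\Gamma_b=\pi(\Lb(\varphi))$.

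The main obstacle I anticipate is not the algebra but the measure-theoretic bookkeeping: I must ensure the pointwise manipulations above are valid $m$-a.e.\ and that the bounded operator $\Lb$ really is computed by the sum-over-preimages formula of Definition~\ref{definition: canonical transfer operator} when applied to an arbitrary $L^2$ function. This compatibility is essentially the content of the proof of Theorem~\ref{Theorem: transfer}, where the change of variables $s=\theta(t)$ and the local inverses $\sigma_j$ were set up for precisely this purpose. If I prefer to bypass pointwise a.e.\ arguments, I can instead run the whole proof inside an inner product: writing $\Lb=\Gamma_{b}^{*}\pi(J_{0})$ reduces the claim to $\Gamma_{b}^{*}\pi(J_{0}\varphi)\Gamma_{b}=\pi(\Lb(\varphi))$, which I would verify by expanding $(\Gamma_{b}^{*}\pi(J_{0}\varphi)\Gamma_{b}\eta,\zeta)$ and applying the transfer identity together with $b\circ\sigma_j=\id$, exactly as in the proof of Theorem~\ref{Theorem: transfer}. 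Finally, the ``in particular'' clause is the special case $\varphi\equiv1$: since $\Lb(1)=1$ by unitality, we get $\Lb\Gamma_b=\pi(\Lb(1))=\pi(1)=\id$, so $\Lb$ is a left inverse of $\Gamma_b$.
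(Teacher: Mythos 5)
Your proposal is correct and follows essentially the same route as the paper: the paper's proof is exactly your central pointwise computation, applying the sum-over-preimages formula for $\Lb$ to $\pi(\varphi)\Gamma_b\xi$ and using $\xi(b(w))=\xi(z)$ for $b(w)=z$ to pull $\xi(z)$ out of the sum. Your additional care in justifying $\Gamma_b\eta=\eta\circ b$ a.e.\ for general $\eta\in L^2(\T)$ (via density and null-set preservation) fills in a step the paper silently assumes, and is a welcome but not essentially different refinement.
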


\begin{proof}Take $\xi\in L^{2}(\T)$ and $\varphi\in L^{\infty}(\T)$
and calculate:
\begin{align*}
\Lb(\pi(\varphi)\Gamma_{b}(\xi))(z)& =\frac{1}{N}\sum_{b(w)=z}(\pi(\varphi)\Gamma_{b}(\xi))(w)=\frac{1}{N}\sum_{b(w)=z}\varphi(\omega)\xi(b(w))\\
& =\frac{1}{N}\sum_{b(w)=z}\varphi(\omega)\xi(z)=\Lb(\varphi)(z)\xi(z)\\
& =(\pi(\Lb(\varphi))\xi)(z).
\end{align*}
 \end{proof}

\begin{lemma}\label{lemma:Cbiso}Set \[
C_{b}:=\pi(J^{\frac{1}{2}})\Gamma_{b}.\]
 Then $C_{b}$ is an isometry on $L^{2}(\T)$ and \[
C_{b}^{*}=\Lb\pi(J^{-\frac{1}{2}}).\]
 Further, if $\{S_{i}\}_{i=1}^{N}$ is the Cuntz family constructed
in Theorem \ref{Thm: Main1} using an orthonormal basis $\{v_{i}\}_{i=1}^{N}$
for $H^{2}(\T)\ominus\pi(b)H^{2}(\T)$, then $S_{i}=\pi(v_{i}J^{-\frac{1}{2}})C_{b}$ for all $1 \leq i \leq N$.\end{lemma}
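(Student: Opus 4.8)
The plan is to deduce all three assertions algebraically from Theorem~\ref{Theorem: transfer}, the defining relation $|J| = J_0$ on $\T$ of the outer function $J$, and Lemma~\ref{lemma:Proto-Conjugate}. I would carry them out in the order: first the formula for $C_b^*$, then the isometry property (which falls out of that formula), then the expression for $S_i$.

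The one place the specific construction of $J$ is used is a pointwise identity on $\T$. Since $J \in H^\infty(\T)$ has $|J(z)| = J_0(z)$ for $z \in \T$ and $J^{1/2}$ is its outer square root (with $J^{-1/2} \in H^\infty(\T)$, and $J_0$ invertible in $L^\infty(\T)$ by Lemma~\ref{Lem: Postive}), one has $\overline{J^{1/2}}\,J^{1/2} = |J^{1/2}|^2 = |J| = J_0$ a.e.\ on $\T$, hence
$$
\overline{J^{1/2}} = J_0\,J^{-1/2} \qquad \text{on } \T.
$$
Using $\pi(\varphi)^* = \pi(\overline\varphi)$ and Theorem~\ref{Theorem: transfer} in the form $\Gamma_b^* = \Lb\,\pi(J_0)^{-1} = \Lb\,\pi(J_0^{-1})$, I would then compute
$$
C_b^* = \Gamma_b^*\,\pi(J^{1/2})^* = \Lb\,\pi(J_0^{-1})\,\pi(\overline{J^{1/2}}) = \Lb\,\pi\big(J_0^{-1}\,\overline{J^{1/2}}\big) = \Lb\,\pi(J^{-1/2}),
$$
where the final step is exactly the displayed identity.

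Granting the formula for $C_b^*$, the isometry claim is immediate from Lemma~\ref{lemma:Proto-Conjugate} with $\varphi = 1$ (and $\Lb(1) = 1$ since $\Lb$ is unital):
$$
C_b^* C_b = \Lb\,\pi(J^{-1/2})\,\pi(J^{1/2})\,\Gamma_b = \Lb\,\pi(1)\,\Gamma_b = \pi(\Lb(1)) = I.
$$
For the last assertion, I note that each $v_i$ lies in $C(\T)$ (by Remark~\ref{Canonical Basis}, every orthonormal basis of the finite-dimensional space $\mathcal{D}$ consists of continuous functions), so $\pi(v_i J^{-1/2})$ is a bounded multiplication operator and $\pi(v_i J^{-1/2})\,\pi(J^{1/2}) = \pi(v_i)$. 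Hence $\pi(v_i J^{-1/2})\,C_b = \pi(v_i)\,\Gamma_b$, and since $\Gamma_b(e_n) = e_n \circ b = b^n$ for every $n \in \Z$, this operator sends $e_n$ to $v_i b^n = S_i(e_n)$; as $\{e_n\}_{n \in \Z}$ is an orthonormal basis and both operators are bounded, they agree.

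I do not anticipate a genuine obstacle: every step is a short manipulation of bounded multiplication operators, the transfer operator, and $\Gamma_b$. The only point needing care is the square-root bookkeeping in $\overline{J^{1/2}} = J_0 J^{-1/2}$ — one must use that $J^{1/2}$ is the fixed outer square root (so that $J^{-1/2} \in H^\infty(\T)$ and the modulus identity $\overline{J^{1/2}}J^{1/2} = |J| = J_0$ holds literally a.e.), rather than an arbitrary measurable square root.
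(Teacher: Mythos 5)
Your proposal is correct and follows essentially the same route as the paper: both rest on the relation $\Gamma_b^* = \Lb\,\pi(J_0)^{-1}$ from Theorem~\ref{Theorem: transfer}, the pointwise identity $\overline{J^{1/2}}J^{1/2} = |J| = J_0$ on $\T$, and Lemma~\ref{lemma:Proto-Conjugate}. The only differences are cosmetic: you derive the adjoint formula first and obtain $C_b^*C_b = I$ from it, whereas the paper computes $C_b^*C_b = \Gamma_b^*\pi(J_0)\Gamma_b = \Lb\Gamma_b = I$ directly, and for the last assertion you spell out (correctly, via agreement on the basis $\{e_n\}_{n\in\Z}$ and continuity of the $v_i$) the identity $S_i = \pi(v_i)\Gamma_b$ that the paper treats as immediate from the definition of $S_i$.
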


\begin{proof} The key is the relation $\Lb \pi(J_0^{-1}) = \Gamma_b^*$ from Theorem~\ref{Theorem: transfer}.  We just compute:
\begin{equation*}
C_{b}^{*}C_{b} = \Gamma_{b}^{*}\pi(\overline{J^{\frac{1}{2}})}\pi(J^{\frac{1}{2}})\Gamma_{b} = \Gamma_{b}^{*}\pi(\vert J\vert)\Gamma_{b} =  \Gamma_{b}^{*}\pi(J_{0})\Gamma_{b} = \Lb\Gamma_{b}=I.
\end{equation*}
and
\begin{equation*}
C_{b}^{*} = \Gamma_{b}^{*}\pi(\overline{J^{\frac{1}{2}}}) 
= \Lb\pi(J_{0})^{-1}\pi(\overline{J^{\frac{1}{2}}}) 
=  \Lb\pi(J^{-\frac{1}{2}}).
\end{equation*}
For the final assertion, simply observe that the definition of $S_{i}$
(using $\{v_{i}\}_{i=1}^{N})$ shows that $S_{i}=\pi(v_{i})\Gamma_{b}$.  As $C_{b}=\pi(J^{\frac{1}{2}})\Gamma_{b}$,
we conclude 
\begin{eqnarray*}
S_{i} & = & \pi(v_{i})\pi(J^{-\frac{1}{2}})\pi(J^{\frac{1}{2}})\Gamma_{b}=\pi(v_{i}J^{-\frac{1}{2}})C_{b}.\\
\\\end{eqnarray*}
 \end{proof}

\begin{proposition}\label{Reducing and implementing L}$H^{2}(\T)$
reduces $C_{b}$ and $C_{b}$ implements $\Lb$ in the sense that
\begin{equation}
C_{b}^{*}\pi(\varphi)C_{b}=\pi(\Lb(\varphi)),\label{eq: Implement L}\end{equation}
 for all $\varphi\in L^{\infty}(\T)$. \end{proposition}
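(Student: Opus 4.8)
The plan is to treat the two assertions of the proposition separately: the implementation identity \eqref{eq: Implement L} is purely algebraic and essentially immediate, while the statement that $H^2(\T)$ reduces $C_b$ has a trivial forward inclusion and a backward inclusion that is the real content.

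For \eqref{eq: Implement L} I would simply substitute the two formulas supplied by Lemma~\ref{lemma:Cbiso}, namely $C_b = \pi(J^{\frac12})\Gamma_b$ and $C_b^* = \Lb\pi(J^{-\frac12})$, to obtain
$$
C_b^*\pi(\varphi)C_b = \Lb\,\pi(J^{-\frac12})\,\pi(\varphi)\,\pi(J^{\frac12})\,\Gamma_b .
$$
Since $\pi$ is multiplicative on the abelian algebra $L^\infty(\T)$ and $J^{-\frac12}J^{\frac12}=1$, the three middle factors collapse to $\pi(\varphi)$, and the identity follows at once from Lemma~\ref{lemma:Proto-Conjugate}, which gives $\Lb\pi(\varphi)\Gamma_b = \pi(\Lb(\varphi))$. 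I anticipate no difficulty here.

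For the reducing property I must establish both $C_b H^2(\T)\subseteq H^2(\T)$ and $C_b^* H^2(\T)\subseteq H^2(\T)$. The first is routine: $\Gamma_b$ preserves $H^2(\T)$ by Corollary~\ref{cor: boundedness gamma-b}, and $\pi(J^{\frac12})$ preserves it because $J^{\frac12}\in H^\infty(\T)$, so $C_b=\pi(J^{\frac12})\Gamma_b$ does as well. The crux is the adjoint inclusion, and it is essential to remember that $\Gamma_b$ by itself is \emph{not} reduced by $H^2(\T)$ unless $b(0)=0$ (Corollary~\ref{cor: boundedness gamma-b}); the outer factor $J$ must therefore carry the argument. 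Here I would take adjoints in Lemma~\ref{lemma:Cbiso} to write $C_b^* = \Gamma_b^*\pi(\overline{J^{\frac12}})$ and invoke the criterion proved at the end of Corollary~\ref{cor: boundedness gamma-b}: if $\xi\in L^2(\T)$ satisfies $\xi b\in H^2(\T)$, then $\Gamma_b^*\xi\in H^2(\T)$ if and only if $(\xi b)(0)=0$. Given $\eta\in H^2(\T)$, I set $\xi:=\overline{J^{\frac12}}\,\eta$, so that $C_b^*\eta=\Gamma_b^*\xi$. The decisive computation uses $\overline{J^{\frac12}}=J_0\,J^{-\frac12}$ on $\T$ (from $|J|=J_0$, via Notation~\ref{notation: J} and Lemma~\ref{Lem: Postive}) together with $J_0=zb'/(Nb)$ from \eqref{eq:J-zero}, which yields
$$
\xi b = \overline{J^{\frac12}}\,\eta\, b = \frac{zb'}{Nb}\,J^{-\frac12}\,\eta\, b = \frac{1}{N}\,z\,b'\,J^{-\frac12}\,\eta .
$$
Because $b'$ is analytic across $\overline{\D}$ we have $zb'\in H^\infty(\T)$, and $J^{-\frac12}\in H^\infty(\T)$, so this product lies in $H^2(\T)$; the explicit factor $z$ forces $(\xi b)(0)=0$, and the criterion then delivers $C_b^*\eta\in H^2(\T)$.

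I expect the adjoint inclusion $C_b^* H^2(\T)\subseteq H^2(\T)$ to be the only genuine obstacle, and within it the decisive point is the cancellation of the denominator $b$ of $J_0$ against the factor $b$ in $\xi b$: this single algebraic step simultaneously places $\xi b$ in $H^2(\T)$ and produces the vanishing at the origin needed to apply the criterion. Everything else reduces to formal manipulation of the two expressions for $C_b$ and $C_b^*$ furnished by Lemma~\ref{lemma:Cbiso}.
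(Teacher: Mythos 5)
Your proof is correct and takes essentially the same route as the paper: the identity \eqref{eq: Implement L} is verified by the same two-line computation from Lemmas~\ref{lemma:Cbiso} and \ref{lemma:Proto-Conjugate}, and the adjoint inclusion rests on the same criterion from the end of Corollary~\ref{cor: boundedness gamma-b} together with the same decisive cancellation $bJ_0 = zb'/N$. The only cosmetic difference is that you apply the criterion directly to $\xi = \overline{J^{1/2}}\,\eta$, whereas the paper writes $C_b^* = \Lb\pi(J^{-\frac12})$ and shows that $\Lb$ itself preserves $H^2(\T)$ (thereby also recovering McDonald's lemma); the two computations coincide under the substitution $\overline{J^{1/2}} = J_0 J^{-1/2}$.
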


\begin{proof}Since $\Gamma_{b}$ and $\pi(J^{\frac{1}{2}})$ leave
$H^{2}(\T)$ invariant, so does $C_{b}=\pi(J^{\frac{1}{2}})\Gamma_{b}$.
On the other hand, $C_{b}^{*}=\Lb\pi(J^{-\frac{1}{2}})$ by Lemma
\ref{lemma:Cbiso}, so one way to show that $H^2(\T)$ reduces $C_b$ is to show that $\Lb$ leaves
$H^{2}(\T)$ invariant. McDonald did this in \cite[Lemma 2]{jMcD03}.  

We can also prove this directly: fixing $\eta \in H^2(\T)$, we must show that $\Lb \eta \in H^2(\T)$.  By Theorem~\ref{Theorem: transfer} we have that $\Lb = \Gamma_b^* \pi(J_0)$, so it suffices to show that the vector $\xi = \pi(J_0) \eta \in L^2(\T)$ is mapped into $H^2(\T)$ by $\Gamma_b^*$.  By the definition \eqref{eq:J-zero} of $J_0$ we have
$$
b(z) \xi(z) = b(z) J_0(z) \eta(z) = z b'(z) \eta(z),
$$
showing that $b \xi$ is in $H^2(\T)$ and that $(b \xi)(0) = 0$.  Thus $\Gamma_b^* \xi \in H^2(\T)$ by the argument given in the proof of Corollary~\ref{cor: boundedness gamma-b}.

Equation~\eqref{eq: Implement L} follows from Lemmas \ref{lemma:Cbiso}
and \ref{lemma:Proto-Conjugate} because \[
C_{b}^{*}\pi(\varphi)C_{b}=\Lb\pi(J^{-\frac{1}{2}})\pi(\varphi)\pi(J^{\frac{1}{2}})\Gamma_{b}=\Lb\pi(\varphi)\Gamma_{b}=\pi(\Lb(\varphi)).\]
 \end{proof}

We shall denote the restriction of $C_{b}$ to $H^{2}(\T)$ by $C_{b+}$.

\begin{theorem}\label{Thm: Intertwine}
\begin{enumerate}
\item \label{first} If $T$ is a bounded operator on $L^{2}(\T)$, then $T$ satisfies 
\begin{equation}
T\pi(\varphi)=\pi(\beta(\varphi))T\label{eq:Intertwine V}, \qquad \varphi \in L^{\infty}(\T),
\end{equation}
if and only if $T=\pi(m)C_{b}$ for some function $m\in L^{\infty}(\T)$.
\item \label{second} If $T$ is a bounded operator on $H^{2}(\T)$, then \begin{equation}
T\tau(\varphi)=\tau(\beta(\varphi))T\label{eq: Intertwine+}, \qquad \varphi \in H^{\infty}(\T), \end{equation}
if and only if $T=\tau(m)C_{b+}$ for some function $m\in H^{\infty}(\T)$.
\end{enumerate}
Further, if $T=\pi(m)C_{b}$, $m\in L^{\infty}(\T)$, (resp. if $T=\tau(m)C_{b+}$,
$m\in H^{\infty}(\T)$) then $\Vert T\Vert=\left(\Vert\mathcal{L}(|m|^{2})\Vert_{\infty}\right)^{\frac{1}{2}}$,
and $T$ is an isometry if and only if $\mathcal{L}(|m|^{2})=1$ a.e.
\end{theorem}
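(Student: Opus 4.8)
The plan is to prove the two biconditionals in parallel and to extract both the norm formula and the isometry criterion at the end from a single computation of $T^{*}T$. Throughout I would lean on the facts already established for the master isometry: that $C_{b}=\pi(J^{\frac12})\Gamma_{b}$ is an isometry reduced by $H^{2}(\T)$ with $C_{b}^{*}\pi(\varphi)C_{b}=\pi(\Lb(\varphi))$ (Proposition~\ref{Reducing and implementing L}), and that $\Lb$ arises from a fiberwise change of variables (Theorem~\ref{Theorem: transfer}).

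For the ``if'' directions I would first record that $C_{b}$ itself intertwines: from $\Gamma_{b}\xi=\xi\circ b$ one has $\Gamma_{b}\pi(\varphi)=\pi(\beta(\varphi))\Gamma_{b}$, and since multiplication operators commute, $C_{b}\pi(\varphi)=\pi(\beta(\varphi))C_{b}$ for all $\varphi\in L^{\infty}(\T)$; restricting to $H^{2}(\T)$ gives $C_{b+}\tau(\varphi)=\tau(\beta(\varphi))C_{b+}$ for $\varphi\in H^{\infty}(\T)$. Then for $T=\pi(m)C_{b}$ (resp.\ $T=\tau(m)C_{b+}$) the relation \eqref{eq:Intertwine V} (resp.\ \eqref{eq: Intertwine+}) follows by sliding $\pi(m)$ (resp.\ $\tau(m)$) past $\pi(\beta(\varphi))$ (resp.\ $\tau(\beta(\varphi))$), using multiplicativity of $\pi$ and that analytic Toeplitz operators commute.

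The substance lies in the ``only if'' directions, where the key point is that an intertwiner is determined by the single vector $g:=Te_{0}$: the identity $Te_{n}=T\pi(e_{n})e_{0}=\pi(\beta(e_{n}))Te_{0}=\pi(b^{n})g=b^{n}g$ holds for all $n\in\Z$ (resp.\ all $n\geq0$, via $\tau$ and $H^{\infty}$ symbols), so on (analytic) trigonometric polynomials $T$ agrees with $\pi(g)\Gamma_{b}$. Setting $m:=gJ^{-\frac12}$ and using $C_{b}=\pi(J^{\frac12})\Gamma_{b}$, this reads $T=\pi(m)C_{b}$ (resp.\ $T=\tau(m)C_{b+}$) on a dense subspace; as $J^{\pm\frac12}\in H^{\infty}(\T)$ we get $m\in L^{2}(\T)$ (resp.\ $m\in H^{2}(\T)$), and it remains to upgrade this to $m\in L^{\infty}(\T)$ (resp.\ $H^{\infty}(\T)$), after which the two bounded operators coincide everywhere. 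For the upgrade I would compute the quadratic form: for a polynomial $\xi$,
\[
\|T\xi\|^{2}=\int_{\T}|g|^{2}\,(|\xi|^{2}\circ b)\,dm=\int_{\T}|\xi|^{2}\,\Lb(J_{0}^{-1}|g|^{2})\,dm=\int_{\T}|\xi|^{2}\,\Lb(|m|^{2})\,dm,
\]
the middle step being the change of variables underlying Theorem~\ref{Theorem: transfer} and the last using $|m|^{2}=J_{0}^{-1}|g|^{2}$ (since $|J|=J_{0}$ on $\T$). Boundedness of $T$ forces the positive function $h:=\Lb(|m|^{2})\in L^{1}(\T)$ to define a bounded form, so $h\in L^{\infty}(\T)$ with $\|h\|_{\infty}\leq\|T\|^{2}$; the pointwise domination $|m(w)|^{2}\leq N\,\Lb(|m|^{2})(b(w))\leq N\|h\|_{\infty}$, immediate from the nonnegativity of the terms in $\Lb(|m|^{2})(z)=\frac1N\sum_{b(w)=z}|m(w)|^{2}$, then yields $m\in L^{\infty}(\T)$. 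Once $m$ is bounded, $T^{*}T=C_{b}^{*}\pi(|m|^{2})C_{b}=\pi(\Lb(|m|^{2}))$ by Proposition~\ref{Reducing and implementing L} (resp.\ its compression $T^{*}T=\tau(\Lb(|m|^{2}))$), giving $\|T\|=\|\Lb(|m|^{2})\|_{\infty}^{1/2}$ and the isometry criterion $\Lb(|m|^{2})=1$ a.e.

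I expect the main obstacle to be the boundedness upgrade in the $H^{2}(\T)$ case: there $\xi$ ranges only over analytic polynomials, so I cannot localize with indicator functions as in $L^{2}(\T)$ to read off $\|h\|_{\infty}$. I would circumvent this with outer functions: given measurable $E$, choosing outer $f_{\varepsilon}\in H^{\infty}(\T)$ with $|f_{\varepsilon}|^{2}=\varepsilon+(1-\varepsilon)1_{E}$ and letting $\varepsilon\to0$ in $\int|f_{\varepsilon}|^{2}h\leq\|T\|^{2}\int|f_{\varepsilon}|^{2}$ yields $\int_{E}h\leq\|T\|^{2}m(E)$, hence $h\leq\|T\|^{2}$ a.e.; the same Szeg\H{o}-type argument gives $\|\tau_{h}\|=\|h\|_{\infty}$ for the positive symbol $h$, which is exactly what the $H^{2}(\T)$ norm formula needs. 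Since the extracted $m$ lies in $H^{2}(\T)$ and is now bounded, $m\in H^{2}(\T)\cap L^{\infty}(\T)=H^{\infty}(\T)$, completing the argument.
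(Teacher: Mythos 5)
Your proof is correct, and while its first half coincides with the paper's, the decisive step is carried out by a genuinely different method. Both arguments begin identically: the intertwining relation forces $T$ to be determined by $g:=T\mathbf{1}$, and with $m:=J^{-1/2}g$ one gets $T=\pi(m)C_{b}$ (resp.\ $\tau(m)C_{b+}$) on a dense subspace with $m$ a priori only in $L^{2}(\T)$ (resp.\ $H^{2}(\T)$) --- this is exactly the paper's $m:=\pi(J^{-1/2})T(\mathbf{1})$. The divergence is in upgrading $m$ to $L^{\infty}(\T)$: the paper takes a Cuntz family $S_{i}=\pi(v_{i}J^{-1/2})C_{b}$ supplied by Theorem~\ref{Thm: Main1} and Lemma~\ref{lemma:Cbiso} and writes multiplication by $m$ as the finite sum $\sum_{j}\pi(v_{j}J^{-1/2})TS_{j}^{*}$ of bounded operators (with the restricted family $R_{j}$ in the $H^{2}$ case), whereas you argue through the transfer operator: the change of variables behind Theorem~\ref{Theorem: transfer} gives $\|T\xi\|^{2}=\int\Lb(|m|^{2})\,|\xi|^{2}\,dm$ on polynomials, boundedness of $T$ then forces $h:=\Lb(|m|^{2})\leq\|T\|^{2}$ a.e.\ (with your outer-function device substituting for indicator functions in the $H^{2}$ case), and nonnegativity of each branch summand in $\Lb(|m|^{2})(z)=\frac{1}{N}\sum_{b(w)=z}|m(w)|^{2}$ yields $|m|^{2}\leq N\,(h\circ b)\leq N\|T\|^{2}$ a.e. Your route trades the paper's operator-algebraic device for the concrete branch structure of Lemma~\ref{Lem: Local homeo}; it buys the explicit bound $\|m\|_{\infty}\leq\sqrt{N}\,\|T\|$ and, as a by-product, a proof that $\|\tau(h)\|=\|h\|_{\infty}$ for positive symbols $h$, a fact the paper invokes without argument in the norm formula of part~(2). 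Two small points you should make explicit in a final write-up: extending the quadratic-form inequality from polynomials to indicators (resp.\ to the outer functions $f_{\varepsilon}$) requires a short Fej\'er-mean/Fatou limiting argument, since at that stage $h$ is only known to lie in $L^{1}(\T)$; and the a.e.\ pointwise bound $|m(w)|^{2}\leq N\,h(b(w))$ uses that $b$-preimages of null sets are null, which for a finite Blaschke product is immediate from the diffeomorphic branches $\sigma_{j}$ of Lemma~\ref{Lem: Local homeo}.
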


\begin{proof} We begin by proving assertion \eqref{first}.  If $T=\pi(m)C_{b}$ for some $m\in L^{\infty}(\T)$,
a short calculation shows that $T$ satisfies \eqref{eq:Intertwine V}.  The formula~\eqref{eq: Implement L} then implies\[
T^{*}T=C_{b}^{*}\pi(\overline{m})\pi(m)C_{b}=\pi(\Lb(|m|^{2})),\]
and $\Vert T\Vert=\left(\Vert\mathcal{L}(|m|^{2})\Vert_{\infty}\right)^{\frac{1}{2}}$ follows as $\pi$ is faithful.  The fact that $T$ is isometric if and only if $\mathcal{L}(|m|^{2})=1$ a.e. is immediate.

Suppose conversely that $T$ is an operator on $L^2(\T)$ satisfying \eqref{eq:Intertwine V}. Define $m:=\pi(J^{-1/2})T(\mathbf{1})$,
where $\mathbf{1}$ is the constant function that is identically equal to $1$.  Note that a priori we have that $m \in L^2(\T)$, but not that $m \in L^{\infty}(\T)$.

The hypothesis \eqref{eq:Intertwine V} and the definition of $m$ imply that
\begin{equation}
T\varphi=T\pi(\varphi)\mathbf{1}=\pi(\beta(\varphi))\pi(J^{1/2})m=mC_{b}(\varphi),\label{eq:InitPropsofm} \qquad \varphi \in L^{\infty}(\T).
\end{equation}
If more generally $\varphi \in L^2(\T)$, there is a sequence $\varphi_n$ in $L^{\infty}(\T)$ such that $\varphi_n \to \varphi$ in $L^2(\T)$.  Boundedness of $C_b$ implies that the sequence of vectors $C_b \varphi_n$ is convergent in $L^2(\T)$ with limit $C_b \varphi$, and boundedness of $T$ together with \eqref{eq:InitPropsofm} implies that the sequence $m C_b \varphi_n$ is convergent in $L^2(\T)$ with limit $T \varphi$.  By passing to a subsequence as necessary we may assume that $C_b \varphi_n \to C_b \varphi$ pointwise a.e., and $m C_b \varphi_n \to T \varphi$ pointwise a.e, and deduce $T \varphi = m C_b \varphi$.  We conclude that 
\begin{equation}
T\varphi=mC_{b}(\varphi),\label{eq:l2holds} \qquad \varphi \in L^2(\T).
\end{equation}

Fix an orthonormal basis $\{v_{i}\}_{i=1}^{N}$ for $H^{2}(\T)\ominus\pi(b)H^{2}(\T)$ and set $S_{i}=\pi(v_{i}J^{-\frac{1}{2}})C_{b}$.  By Lemma \ref{lemma:Cbiso} and Theorem \ref{Thm: Main1}, $\{S_{i}\}_{i=1}^{N}$
is a Cuntz family, so for any $\xi \in L^2(\T)$ we have 
\begin{align*}
m\xi =  m\sum_{j=1}^{N}S_{j}S_{j}^{*}\xi & =  m\sum_{j=1}^{N}\pi(v_{j}J^{-1/2})C_{b} S_j^{*}\xi \\
 & = \sum_{j=1}^{N} v_{j}J^{-1/2} m C_{b} S_j^{*}\xi \\
 & = \sum_{j=1}^N v_j J^{-1/2} T S_j^* \xi & \text{by \eqref{eq:l2holds}.}
\end{align*}
Thus multiplication by $m$ is the operator $\sum_{j=1}^N \pi(v_j J^{-1/2}) T S_j^*$ on $L^2(\T)$.  As this operator is bounded we deduce that $m \in L^{\infty}(\T)$ as desired.

The proof of assertion \eqref{second} is similar, but it is important to keep track of the differences. If $T=\tau(m)C_{b+}$ for some $m\in H^{\infty}(\T)$, it is easily seen that \eqref{eq: Intertwine+} is satisfied, since $\tau(m)$ and $\tau(\varphi)$
commute when $m$ and $\varphi$ are in $H^{\infty}(\T)$, and since
$C_{b+}$ is the restriction of $C_{b}$ to a reducing subspace. Furthermore,
\[
T^{*}T=C_{b+}^{*}\tau(m)^{*}\tau(m)C_{b+}=PC_{b}^{*}P\pi(\overline{m})P\pi(m)PC_{b}P\vert_{H^{2}(\T)}.\]
 Since $H^{2}(\T)$ is invariant under $\pi(m)$ and reduces $C_{b}$, we deduce
\[
T^* T = PC_{b}^{*}\pi(\overline{m})\pi(m)C_{b}P\vert_{H^{2}(\T)}=P\pi(\Lb(\vert m\vert^{2}))P\vert_{H^{2}(\T)}=\tau(\Lb(\vert m\vert^{2})).\]
Thus $\Vert T^{*}T\Vert=\Vert\tau(\Lb(\vert m\vert^{2}))\Vert=\Vert\Lb(\vert m\vert^{2})\Vert_{\infty}$,
which proves the formula for the norm of $T$. Also, it shows that
$T$ is an isometry if and only if $\mathcal{L}(|m|^{2})=1$ a.e.

Suppose conversely that $T$ on $H^{2}(\T)$ satisfies equation~\eqref{eq: Intertwine+} and set $m:=\tau(J^{-1/2})T(\mathbf{1})$; we know $m \in H^2(\T)$ and wish to deduce that $m \in H^{\infty}(\T)$.  The fact that $J^{-\frac{1}{2}}\in H^{\infty}(\T)$ and the properties
of $C_{b+}$ show that \[
T\varphi=T\tau(\varphi)\mathbf{1}=\tau(\beta(\varphi))\tau(J^{1/2})m=mC_{b+}(\varphi)\]
 for all $\varphi\in H^{\infty}(\T)$ and hence all $\varphi \in H^2(\T)$.  With $S_{i}=\pi(v_{i}J^{-\frac{1}{2}})C_{b}$ as before, we note that $H^{2}(\T)$ reduces $S_{i}$, by Theorem \ref{Thm: Main1}, and we set $R_{i}:=S_{i}\vert_{H^{2}(\T)}$. Theorem \ref{Thm: Main1} asserts that $\{R_{i}\}_{i=1}^{N}$ is a Cuntz family
of isometries on $H^{2}(\T)$. Since $H^{2}(\T)$ reduces $C_{b}$,
we have for any $\xi \in H^2(\T)$
\begin{align*}
m\xi & = m\sum_{j=1}^{N}R_{j}R_{j}^{*}\xi = m\sum_{j=1}^{N} \pi(v_{j}J^{-1/2})C_{b}R_j^* \xi 
  = \sum_{j=1}^{N} v_{j}J^{-1/2} mC_{b+}R_{j}^{*}\xi \\
& = \sum_{j=1}^{N} \pi(v_{j}J^{-1/2}) T R_{j}^{*}\xi. 
\end{align*}
As $v_j J^{-1/2} \in H^{\infty}$ for each $j$, the conclusion is that multiplication by $m$ is the bounded operator $\sum_{j=1}^n \tau(v_j J^{-1/2}) T R_j^*$ on $H^2(\T)$.  Thus $m\in H^{\infty}(\T)$.
\end{proof}

We have called $C_{b}$ \emph{the} master isometry.  One explanation of our use of the definite article is that when one builds the Deaconu-Renault groupoid $G$ determined by $b$, viewed as a local homeomorphism of $\T$, then $C_{b}$ appears as the image of a special isometry $S$ in the groupoid $C^{*}$-algebra $C^{*}(G)$ under a representation that gives rise to the Cuntz familes we consider here.  We have not seen any compelling reason to bring this technology into this note - nevertheless, $C^{*}(G)$ and $S$ are lying in the background and may prove useful in the future.  For further information about the use of groupoids and $C^{*}$-algebras generated by local homeomorphisms, see \cite{IM08}.

One should not infer from our use of the definite article that $C_b$ is uniquely determined by the abstract properties that we have shown it has.  More precisely, suppose that $V$ is an isometry on $L^2(\T)$ that implements $\Lb$ in the sense of \eqref{eq: Implement L}, interacts with $\pi$ as in \eqref{eq:Intertwine V}, and is reduced by $H^2(\T)$.  By Theorem~\ref{Thm: Intertwine}, $V$ must be of the form $V=\pi(m)C_{b}$ for some $m\in L^{\infty}(\T)$ satisfying $\Lb(|m|^{2})=1$. The assumption that $V$ is reduced by $H^{2}(\T)$, together with the assumption that $V$ implements $\Lb$ imply that $|m|=1$ a.e.; it may further be shown that $m$ is an inner function with the property that $\Lb(\overline{m})$ is
constant. But in general we have been unable to deduce more about $m$ than this.  (We note that $m$ need not be constant.  If $b(z)=z^{2}$, then $V=\pi(m)C_{b}$ will have all of the indicated properties if $m(z) = z^k$ for any odd positive integer $k$.  What happens when $b$ is a more general Blaschke product remains to be investigated.)

\section{Hilbert modules and orthonormal bases}

The endomorphism $\beta$ of $L^{\infty}(\T)$ and the transfer operator
$\Lb$ may be used to endow $L^{\infty}(\T)$ with the structure of
a Hilbert $C^{*}$-module over $L^{\infty}(\T)$. We will exploit
this structure in order to solve Problem \ref{Problem: Central problem}.
We do not need much of the general theory about these modules. Rather,
we only need to expose enough so that the formulas we use make good
sense. Excellent references for the basics of the theory are \cite{cL95,MT05}.

Suppose $A$ is a $C^{*}$-algebra and that $E$ is a right $A$-module.
Then $E$ is called a \emph{Hilbert $C^{*}$-module} over $A$ in
case $E$ is endowed with an $A$-valued sesquilinear form $\<\cdot,\cdot\>:E\times E\to A$
that is subject to the following conditions. 
\begin{enumerate}
\item $\<\cdot,\cdot\>$ is conjugate linear in the first variable,
so $\<\xi\cdot a,\eta\cdot b\>=a^{*}\<\xi,\eta\> b$. 
\item For all $\xi\in E$, $\<\xi,\xi\>$ is a positive element
in $A$ that is $0$ if and only if $\xi=0$. 
\item $E$ is complete in the norm defined by the formula $\Vert\xi\Vert_{E}:=\Vert\<\xi,\xi\>\Vert_{A}^{\frac{1}{2}}$. 
\end{enumerate}
Of course, it takes a little argument to prove that $\Vert\cdot\Vert_{E}$
is a norm on $E$.

In the application of Hilbert modules that we have in mind, our $C^{*}$-algebra
$A$ will be unital, and we will denote the unit by $\mathbf{1}$.
A vector $v\in E$ is called a \emph{unit vector} if $\< v,v\>=\mathbf{1}$.
Note that this says more than simply $\Vert v\Vert=1$. A family $\{v_{i}\}_{i\in I}$
of vectors in $E$ is called is called an \emph{orthonormal set} if
$\< v_{i},v_{j}\>=\delta_{ij}\mathbf{1}$. Further, if linear
combinations of vectors from $\{v_{i}\}_{i\in I}$ (where the coefficients
are from $A$) are dense in $E$ then we say that $\{v_{i}\}_{i\in I}$
is an \emph{orthonormal basis} for $E$. In this event, every vector
$\xi\in E$ has the representation \begin{equation}
\xi=\sum_{i\in I}v_{i}\cdot\< v_{i},\xi\>,\label{eq:FourierExpansion}\end{equation}
 where the sum converges in the norm of $E$. In general, a Hilbert
$C^{*}$-module need not have an orthonormal basis. Also, in general,
two orthonormal bases need not have the same cardinal number. Nevertheless,
two orthonormal bases $\{v_{i}\}_{i\in I}$ and $\{w_{j}\}_{j\in J}$
are linked by a unitary matrix over $A$ in the usual way:\[
w_{j}=\sum_{i\in I}v_{i}\cdot\< v_{i},w_{j}\>=\sum_{i\in I}v_{i}\cdot u_{ij}.\]
 So if the cardinality of $I$ is $n$ and the cardinality of $J$
is $m$, then $U=(u_{ij})$ is a unitary matrix in $M_{nm}(A)$, i.e.,
$UU^{*}=\mbox{\textbf{1}}_{n}$ in $M_{n}(A)$, while $U^{*}U=\mathbf{1}_{m}$
in $M_{m}(A)$. And conversely, any such matrix transforms the orthonormal
basis $\{v_{i}\}_{i\in I}$ for $E$ into an orthonormal basis $\{w_{j}\}_{j\in J}$
for $E$ via this formula. In our application of these notions, the
coefficient algebra $A$ will be commutative so, as is well known,
all unitary matrices are square and, therefore, any two orthonormal
bases have the same number of elements.

We shall view $L^{\infty}(\T)$ as right module over $L^{\infty}(\T)$
via the formula \begin{equation}
\xi\cdot a:=\xi\beta(a),\label{eq:module} \qquad  a, \xi \in L^{\infty}(\T),
\end{equation}
where the product on the right hand side is the usual pointwise product in $L^{\infty}(\T)$.
Also, we shall use $\Lb$ to endow $L^{\infty}(\T)$ with the $L^{\infty}(\T)$-valued
inner product defined by the formula \begin{equation}
\<\xi,\eta\>:=\Lb(\overline{\xi}\eta),\label{eq:innerproduct} \qquad \xi,\eta\in L^{\infty}(\T). \end{equation}
Using the fact that $\Lb\circ\beta=\id_{L^{\infty}(\T)}$
(Lemma \ref{lemma:Proto-Conjugate}), it is straightforward to see
that $L^{\infty}(\T)$ is a Hilbert $C^{*}$-module over $L^{\infty}(\T)$,
which we shall denote by $\LiL$. The only thing that may be seem
problematic is the fact that $\LiL$ is complete in the norm defined
by the inner product. However, a moment's reflection reveals that the norm is equivalent to the $L^{\infty}(\T)$-norm, which is complete.  We remark that \eqref{eq:module} and \eqref{eq:innerproduct} make sense
when the functions in $L^{\infty}(\T)$ are restricted to lie in $C(\T)$,
and $C(\T)$ also is a Hilbert module over $C(\T)$ in this structure, but we will focus on the $L^{\infty}(\T)$ case in what follows.

Vectors $\{m_{i}\}_{i=1}^{N}$ in $\LiL$ form an orthonormal basis
for $\LiL$ if and only if \[
\Lb(\overline{m_{i}}m_{j})=\< m_{i},m_{j}\>=\delta_{ij}\mathbf{1},\]
 where $\mathbf{1}$ is the constant function $1$, and 
$$
f=\sum_{i=1}^{N}m_{i}\cdot\< m_{i},f\>=\sum_{i=1}^{N}m_{i}\beta(\Lb(\overline{m_{i}}f)), \qquad f \in \LiL.
$$
We have intentionally used $N$, the order of the Blaschke product
$b$, as the upper limit in these sums because $\LiL$ has an orthonormal
basis with $N$ elements, viz. $\{\sqrt{N}1_{A_{i}}\}_{i=1}^{N}$,
where the $A_{i}$'s are the arcs in Lemma \ref{Lem: Local homeo},
and because any two orthonormal bases for $\LiL$ have the same number
of elements, as we noted above.

\begin{remark}\label{rem:ConditionalExpectations} As a map on $L^{\infty}(\T)$,
$\mathbb{E}:=\beta\circ\Lb$ is the conditional expectation onto the
range of $\beta$. Indeed, $\mathbb{E}$ is a weak-$*$ continuous,
positivity preserving, idempotent unital linear map on $L^{\infty}(\T)$.
So it is the restriction to $L^{\infty}(\T)$ of an idempotent and contractive
linear map on $L^{1}(\T)$ that preserves the constant functions.
Hence $\mathbb{E}$ is a conditional expectation by the corollary
to \cite[Theorem 1]{rD65}. Of course, the range of $\mathbb{E}$
consists of functions in the range of $\beta$ by definition. On the
other hand, if $f=\beta(g)$ for some function $g\in L^{\infty}(\T)$,
then $\mathbb{E}(f)=\beta\circ\Lb\circ\beta(g)=\beta(g)=f$, since
$\Lb$ is a left inverse for $\beta$. Thus, to say that $\{m_{i}\}_{i=1}^{N}$
is an orthonormal basis for $\LiL$ is to say that 
$$
f=\sum_{i=1}^{N}m_{i}\mathbb{E}(\overline{m}_{i}f), \qquad f \in L^{\infty}(\T).
$$
\end{remark}

In light of the discussion in Section~\ref{sec:preliminaries}, the following describes all solutions to \eqref{eq:cov1}.

\begin{theorem}\label{Thm:Solution 1} If a Cuntz family $S = \{S_i\}_{i=1}^N$ on $B(L^2(\T))$ gives rise to a covariant representation $(\pi,\alpha_{S})$ of $(L^{\infty}(\T),\beta)$, then there is an orthonormal
basis $\lbrace m_{i}\rbrace_{i=1}^{N}$ for $L^{\infty}(\T)_{\mathcal{L}}$
such that \begin{equation}
S_{i}=\pi(m_{i})C_{b},\label{eq:DefS_i} \qquad 1 \leq i \leq N.\end{equation}
Further, if $\{m_{i}\}_{i=1}^{N}$ is any family
of functions in $L^{\infty}(\T)$ such that the operators $S_i$ defined by \eqref{eq:DefS_i} form a Cuntz family $S$ such that $(\pi, \alpha_S)$ is a covariant representation of $(L^{\infty}(\T), \beta)$, then $\{m_{i}\}_{i=1}^{N}$ is an orthonormal basis for $\LiL$. Conversely, if $\{m_{i}\}_{i=1}^{N}$ is any orthonormal basis for $\LiL$ and $S_i$ is defined by \eqref{eq:DefS_i} for $1 \leq i \leq N$, then $S=\{S_{i}\}_{i=1}^{N}$ is a Cuntz family and $(\pi, \alpha_S)$ is a covariant representation of $(L^{\infty}(\T), \beta)$. 
\end{theorem}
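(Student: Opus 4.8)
The plan is to treat all three assertions through the single identification $S_i=\pi(m_i)C_b$ and to show that, under it, the two Cuntz relations \eqref{eq:Cuntz 1} and \eqref{eq:Cuntz 2} correspond \emph{exactly} to the two conditions making $\{m_i\}_{i=1}^N$ an orthonormal basis of $\LiL$: the relations $\langle m_i,m_j\rangle=\delta_{ij}\mathbf 1$ and the Fourier expansion \eqref{eq:FourierExpansion}. The ingredients I would use are Theorem~\ref{Thm: Intertwine}\eqref{first} (to produce the $m_i$, and to record that any $\pi(m)C_b$ satisfies \eqref{eq:Intertwine V}), the implementation formula \eqref{eq: Implement L} of Proposition~\ref{Reducing and implementing L}, the faithfulness of $\pi$, and the explicit descriptions $C_b\varphi=J^{1/2}\beta(\varphi)$ for $\varphi\in L^\infty(\T)$ (since $\Gamma_b$ restricts to $\beta$ on $L^\infty(\T)$) and $C_b^*=\Lb\pi(J^{-1/2})$ from Lemma~\ref{lemma:Cbiso}.

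To obtain the form $S_i=\pi(m_i)C_b$ in the first assertion, I would begin with the covariance relation \eqref{eq:Cuntz1a}, namely $\sum_j S_j\pi(\varphi)S_j^*=\pi(\beta(\varphi))$, and multiply on the right by $S_i$. The relation $S_j^*S_i=\delta_{ji}I$ collapses the sum to a single term, yielding $S_i\pi(\varphi)=\pi(\beta(\varphi))S_i$ for every $\varphi$; that is, each $S_i$ satisfies \eqref{eq:Intertwine V}. Theorem~\ref{Thm: Intertwine}\eqref{first} then furnishes $m_i\in L^\infty(\T)$ with $S_i=\pi(m_i)C_b$. It remains to identify $\{m_i\}_{i=1}^N$ as an orthonormal basis of $\LiL$, which is also the content of the second assertion, so I would establish that once and reuse it.

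The heart of the argument is the pair of correspondences. For orthonormality I would compute, using \eqref{eq: Implement L},
\[
S_i^*S_j=C_b^*\pi(\overline{m_i}m_j)C_b=\pi(\Lb(\overline{m_i}m_j))=\pi(\langle m_i,m_j\rangle);
\]
since $\pi$ is faithful, the relation \eqref{eq:Cuntz 1} is equivalent to $\langle m_i,m_j\rangle=\delta_{ij}\mathbf 1$. For completeness I would use the explicit formulas for $C_b$ and $C_b^*$ to get, for $\xi\in L^2(\T)$,
\[
\sum_{i=1}^N S_iS_i^*\,\xi=\sum_{i=1}^N m_i\,J^{1/2}\,\beta(\Lb(J^{-1/2}\overline{m_i}\,\xi)).
\]
Evaluating this on $\xi=J^{1/2}f$ with $f\in L^\infty(\T)$ and cancelling the invertible factor $J^{1/2}$ shows that \eqref{eq:Cuntz 2} is equivalent to $f=\sum_{i=1}^N m_i\,\beta(\Lb(\overline{m_i}f))$ for all such $f$, which is exactly the Fourier expansion \eqref{eq:FourierExpansion} for $\{m_i\}$. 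Since \eqref{eq:Cuntz 1} and \eqref{eq:Cuntz 2} characterise a Cuntz family, these two equivalences (read left to right) give the second assertion, and together with the previous paragraph the first.

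For the converse (third assertion) I would read the two equivalences from right to left: an orthonormal basis $\{m_i\}_{i=1}^N$ yields, via the two displayed computations, operators $S_i=\pi(m_i)C_b$ satisfying \eqref{eq:Cuntz 1} and \eqref{eq:Cuntz 2}, hence a Cuntz family. Covariance is then automatic, since each $S_i=\pi(m_i)C_b$ already satisfies \eqref{eq:Intertwine V} by Theorem~\ref{Thm: Intertwine}\eqref{first}, so $\sum_i S_i\pi(\varphi)S_i^*=\pi(\beta(\varphi))\sum_i S_iS_i^*=\pi(\beta(\varphi))$. I expect the only genuine obstacle to be the completeness equivalence: whereas orthonormality and the intertwining drop out immediately from \eqref{eq: Implement L} and the Cuntz relations, matching $\sum_i S_iS_i^*=I$ to the module expansion requires the explicit action of $C_b$ and $C_b^*$ together with a density step, using that the vectors $J^{1/2}f$ ($f\in L^\infty(\T)$) exhaust $L^\infty(\T)$ and are dense in $L^2(\T)$, in order to pass between the bounded-operator identity on $L^2(\T)$ and the pointwise identity in $\LiL$.
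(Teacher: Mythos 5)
Your proposal is correct, and its skeleton matches the paper's: multiply the covariance relation on the right by $S_i$ and invoke Theorem~\ref{Thm: Intertwine}\eqref{first} to get $S_i=\pi(m_i)C_b$; read orthonormality off $S_i^*S_j=\pi(\Lb(\overline{m_i}m_j))$ via \eqref{eq: Implement L} and faithfulness of $\pi$; and verify covariance at the end from the intertwining relation \eqref{eq:Intertwine V}. Where you genuinely depart from the paper is in the completeness step. The paper uses two \emph{different} arguments for the two directions: in the forward direction it shows the annihilator of $\{m_i\}$ in $\LiL$ vanishes (if $\< f,m_i\>=0$ for all $i$, then $(\pi(f)C_b)^*=\sum_i\pi(\< f,m_i\>)S_i^*=0$, whence $fJ^{1/2}=\pi(f)C_b\mathbf{1}=0$ and $f=0$), while in the converse direction it notes $\sum_i S_iS_i^*$ is a projection and checks it fixes each $f=\pi(fJ^{-1/2})C_b\mathbf{1}\in L^{\infty}(\T)$. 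You instead establish a single two-way equivalence --- \eqref{eq:Cuntz 2} holds if and only if the Fourier expansion \eqref{eq:FourierExpansion} holds for all $f\in L^{\infty}(\T)$ --- by evaluating $\sum_i S_iS_i^*$ on $\xi=J^{1/2}f$ and cancelling $J^{1/2}$, and you read it in both directions; note that your converse computation is the paper's in disguise, since $\pi(fJ^{-1/2})C_b\mathbf{1}=J^{1/2}f\cdot J^{-1/2}\cdot 1$ up to reshuffling. Your organization buys a cleaner dictionary (Cuntz relations $\leftrightarrow$ orthonormal basis conditions) and in fact shows the second assertion needs only the Cuntz relations, not covariance; the paper's annihilator argument buys freedom from pointwise manipulation of $C_b$ on $L^2$ vectors. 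Two small points you should make explicit to be airtight: the cancellation of $J^{1/2}$ is legitimate because $J_0$ is bounded away from zero on $\T$ (so $\pi(J^{1/2})$ is invertible, $J^{-1/2}\in H^{\infty}(\T)$), and in your key computation you should restrict at the outset to $\xi=J^{1/2}f$ with $f\in L^{\infty}(\T)$, so that every function appearing ($\overline{m_i}f$, $\Lb(\overline{m_i}f)$, $\beta(\Lb(\overline{m_i}f))$) stays in $L^{\infty}(\T)$ and $\Gamma_b$ acts as genuine composition; your density step, using that $\{J^{1/2}f: f\in L^{\infty}(\T)\}=L^{\infty}(\T)$ is dense in $L^2(\T)$ and that the finite sum $\sum_i S_iS_i^*$ is bounded, then replaces the paper's observation that this sum is a projection.
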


\begin{proof} Suppose $\lbrace S_{i}\rbrace_{i=1}^{N}$ is a Cuntz family
on $L^{2}(\T)$ that satisfies equation~\eqref{eq:Cuntz1a}. If both
sides of this equation are multiplied on the right by $S_{j}$, then
one finds from equation~\eqref{eq:Cuntz 1} that $S_{j}\pi(\cdot)=\pi\circ\beta(\cdot)S_{j}$
for each $j$. By Theorem \ref{Thm: Intertwine}, for each $j$ there is $m_j \in L^{\infty}(\T)$ satisfying $S_{j}=\pi(m_{j})C_{b}$ and $\mathbf{1}=\Lb(|m_{j}|^{2})=\< m_{j},m_{j}\>$.
The fact that $S$ satisfies equation~\eqref{eq:Cuntz 1} then yields
$$
\delta_{i,j}I_{L^{2}(\T)}  =  S_{i}^{*}S_{j}
=  C_{b}^{*}\pi(\overline{m_{i}}m_{j})C_{b}
=  \pi(\mathcal{L}(\overline{m_{i}}m_{j}))
= \pi(\< m_{i},m_{j}\>).
$$
 Since $\pi$ is faithful, $\< m_{i},m_{j}\>=\delta_{i,j}\mathbf{1}$,
where $\mathbf{1}$ is the constant function $1$. Thus, $\lbrace m_{i}\rbrace_{i=1}^{N}$
is an orthonormal set in $L^{\infty}(\T)_{\mathcal{L}}$. We now show that the $\{m_i\}_{i=1}^N$ span $\LiL$.  If 
$f\in\LiL$ satisfies $\< f,m_{i}\>=0$
for all $i$, then we have 
\begin{align*}
(\pi(f)C_{b})^{*} = C_{b}^{*}\pi(\overline{f})\left(\sum_{i=1}^{N}S_{i}S_{i}^{*}\right) & = C_{b}^{*}\pi(\overline{f})\left(\sum_{i=1}^{N}\pi(m_{i})C_{b}S_{i}^{*}\right)\\
 & = \sum_{i=1}^{N}C_{b}^{*}\pi(\overline{f})\pi(m_{i})C_{b}S_{i}^{*}\\
 & = \sum_{i=1}^{N}\pi(\< f,m_{i}\>)S_{i}^{*} & \text{by \eqref{eq: Implement L}} \\
 & = 0,
\end{align*}
and thus $\pi(f)C_{b}=0$, which in turn implies $fJ^{\frac{1}{2}}=\pi(f)J^{\frac{1}{2}}=\pi(f)C_{b}\mathbf{1}=0$, and thus $f=0$.  This shows that $\{m_{i}\}_{i=1}^{N}$ is an orthonormal basis for $\LiL$.

For the converse assertion, suppose $\lbrace m_{i}\rbrace_{i=1}^{N}$
is any orthonormal basis for $\LiL$, and set $S_{i}:=\pi(m_{i})C_{b}$.
Then from \eqref{eq: Implement L} we deduce \begin{eqnarray*}
S_{i}^{*}S_{j} & = & C_{b}^{*}\pi(\overline{m_{i}}m_{j})C_{b}\\
 & = & \pi(\< m_{i},m_{j}\>)\\
 & = & \delta_{i,j}\pi(\mathbf{1})=\delta_{i,j}I_{L^{2}(\T)}.\end{eqnarray*}
 So the relations \eqref{eq:Cuntz 1} are satisfied.  To verify the Cuntz identity \eqref{eq:Cuntz 2}, note first that equation~\eqref{eq:Cuntz 1} shows that the sum $\sum_{i=1}^{N}S_{i}S_{i}^{*}$ is a projection.
To show that $\sum_{i=1}^{N}S_{i}S_{i}^{*}=I$, it suffices
to show that $\sum_{i=1}^{N}S_{i}S_{i}^{*}$ acts as the identity
operator on a dense subset of $L^{2}(\T)$.  So fix $f\in L^{\infty}(\T)$ and observe
that we may write \begin{equation}
\sum_{i=1}^{N}S_{i}S_{i}^{*}f=\sum_{i=1}^{N}S_{i}S_{i}^{*}\pi(f)1=\sum_{i=1}^{N}S_{i}S_{i}^{*}\pi(f)\Gamma_{b}1=\sum_{i=1}^{N}S_{i}S_{i}^{*}\pi(fJ^{-\frac{1}{2}})C_{b}1.\label{eq:ONB1}\end{equation}
 Since $S_{i}=\pi(m_{i})C_{b}$, the last sum in \eqref{eq:ONB1}
is \[
\sum_{i=1}^{N}\pi(m_{i})C_{b}C_{b}^{*}\pi(\overline{m_{i}})\pi(fJ^{-\frac{1}{2}})C_{b}1=\sum_{i=1}^{N}\pi(m_{i})C_{b}\pi(\Lb(\overline{m_{i}}fJ^{-\frac{1}{2}}))1,\]
 where we have used \eqref{eq: Implement L}. But by Theorem \ref{Thm: Intertwine} the right hand side of this equation is \[
\sum_{i=1}^{N}\pi(m_{i})\pi(\beta(\Lb(\overline{m_{i}}fJ^{-\frac{1}{2}})))C_{b}1=\pi\left(\sum_{i=1}^{N}m_{i}\< m_{i},fJ^{-\frac{1}{2}}\>\right)C_{b}1=\pi(fJ^{-\frac{1}{2}})C_{b}1,\]
 because $\{m_{i}\}_{i=1}^{N}$ is an orthonormal basis for $\LiL$,
by hypothesis. As $C_{b}1=\pi(J^{\frac{1}{2}})\Gamma_{b}1=\pi(J^{\frac{1}{2}})1$ 
it follows that $\pi(fJ^{-\frac{1}{2}})C_{b}1=f$, and thus $\sum_{i=1}^{N}S_{i}S_{i}^{*}f=f$.  We conclude that $S = \{S_i\}_{i=1}^N$ is a Cuntz family.  

To see that this family implements $\beta$, simply note
that\[
\pi(\beta(\varphi))=\pi(\beta(\varphi))\sum_{i=1}^{N}S_{i}S_{i}^{*}=\sum_{i=1}^{N}S_{i}\pi(\varphi)S_{i}^{*}\]
 since the $S_{i}$ satisfy equation~\eqref{eq:Intertwine V}. \end{proof}

\begin{corollary}\label{Cor: module basis} If $\{v_{i}\}_{i=1}^{N}$
is an orthonormal basis for the Hilbert \emph{space} $H^{2}(\T)\ominus\pi(b)H^{2}(\T)$,
then the functions $\{v_{i}J^{-\frac{1}{2}}\}_{i=1}^{N}$ form an
orthonormal basis for the Hilbert \emph{module} $\LiL$.\end{corollary}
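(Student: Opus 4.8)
The plan is to recognize that this corollary is essentially a repackaging of work already done, and to deduce it from the converse half of Theorem~\ref{Thm:Solution 1}. The crucial bridge is Lemma~\ref{lemma:Cbiso}, which tells us that the Cuntz family $\{S_i\}_{i=1}^N$ produced by Theorem~\ref{Thm: Main1} from the orthonormal basis $\{v_i\}_{i=1}^N$ of the Hilbert \emph{space} $\mathcal{D}=H^2(\T)\ominus\pi(b)H^2(\T)$ may be written as $S_i=\pi(v_i J^{-1/2})C_b$. Setting $m_i:=v_iJ^{-1/2}$, this is exactly the defining relation~\eqref{eq:DefS_i}.

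First I would invoke Theorem~\ref{Thm: Main1} to record that $\{S_i\}_{i=1}^N$ is a Cuntz family on $L^2(\T)$ for which $(\pi,\alpha_S)$ is a covariant representation of $(L^\infty(\T),\beta)$. Since these $S_i$ arise from the family $\{m_i\}_{i=1}^N$ via~\eqref{eq:DefS_i} and form such a Cuntz family, the ``Further'' assertion of Theorem~\ref{Thm:Solution 1} applies verbatim and yields that $\{m_i\}_{i=1}^N=\{v_iJ^{-1/2}\}_{i=1}^N$ is an orthonormal basis for $\LiL$, which is the desired conclusion.

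Should a more self-contained argument be wanted, I would instead verify orthonormality by a direct computation and the basis property by the nullity argument from the proof of Theorem~\ref{Thm:Solution 1}. For orthonormality, using $|J|=J_0$ on $\T$ one computes $\langle v_iJ^{-1/2},v_jJ^{-1/2}\rangle=\Lb(\overline{v_i}v_jJ_0^{-1})$; then by Lemma~\ref{lemma:Cbiso} and the implementation formula~\eqref{eq: Implement L} of Proposition~\ref{Reducing and implementing L}, $\pi(\langle v_iJ^{-1/2},v_jJ^{-1/2}\rangle)=C_b^*\pi(\overline{v_i}v_jJ_0^{-1})C_b=S_i^*S_j=\delta_{ij}I$, so faithfulness of $\pi$ gives $\langle v_iJ^{-1/2},v_jJ^{-1/2}\rangle=\delta_{ij}\mathbf{1}$. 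For the basis (spanning) property, if $f\in\LiL$ satisfies $\langle f,m_i\rangle=0$ for all $i$, then, exactly as in the proof of Theorem~\ref{Thm:Solution 1}, the Cuntz relation $\sum_iS_iS_i^*=I$ forces $\pi(f)C_b=0$ and hence $f=0$.

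I anticipate no serious obstacle, since the content has been prepared entirely by Lemma~\ref{lemma:Cbiso} and Theorem~\ref{Thm:Solution 1}. The only point needing a moment's care is the bookkeeping identity $\overline{J^{-1/2}}J^{-1/2}=J_0^{-1}$ on $\T$ (from $|J|=J_0$), which is what makes the module inner product $\langle v_iJ^{-1/2},v_jJ^{-1/2}\rangle$ agree with the Cuntz computation of $S_i^*S_j$.
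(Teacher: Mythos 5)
Your primary argument is correct and is essentially identical to the paper's own proof: both invoke Lemma~\ref{lemma:Cbiso} to write the Cuntz isometries from Theorem~\ref{Thm: Main1} as $S_i=\pi(v_iJ^{-1/2})C_b$ and then apply the ``Further'' assertion of Theorem~\ref{Thm:Solution 1}. Your optional self-contained verification (orthonormality via $|J|=J_0$ and \eqref{eq: Implement L}, spanning via the nullity argument) is also sound, but it merely unwinds the same machinery and adds nothing beyond the paper's two-line deduction.
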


\begin{proof}By Lemma~\ref{lemma:Cbiso}, the Cuntz isometries coming from $\{v_i\}_{i=1}^N$ via Theorem \ref{Thm: Main1} have the form $\pi(v_{i}J^{-\frac{1}{2}})C_{b}$.
Therefore by Theorem \ref{Thm:Solution 1} the functions $\{v_{i}J^{-\frac{1}{2}}\}_{i=1}^{N}$
form an orthonormal basis for $\LiL$. \end{proof}

\begin{corollary}\label{cor: Uniqueness S} If $S^{(1)}$ and $S^{(2)}$
are two Cuntz families in $B(L^{2}(\T))$ satisfying
$$
\alpha_{S^{(i)}}\circ\pi=\pi\circ\beta, \qquad i = 1, 2,
$$
then there is a unitary matrix $(u_{ij})$ in $M_{N}(L^{\infty}(\T))$
such that \begin{equation}
S_{j}^{(2)}=\sum_{i=1}^{N}S_{i}^{(1)}\pi(u_{ij}),\label{eq:Uniqueness S}\end{equation}
 $j=1,2,\cdots,N$. Conversely, if $S^{(1)}$ and $S^{(2)}$ are Cuntz
families on $L^{2}(\T)$ that are linked by equation~\eqref{eq:Uniqueness S},
then $\alpha_{S^{(1)}}$ implements $\beta$ if and only if $\alpha_{S^{(2)}}$
implements $\beta$. Further, $\alpha_{S^{(1)}}=\alpha_{S^{(2)}}$
on $B(L^{2}(\T))$ if and only if $(u_{ij})$ is a unitary matrix
of constant functions. \end{corollary}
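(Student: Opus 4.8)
The plan is to reduce everything to the Hilbert-module picture of Theorem~\ref{Thm:Solution 1} together with Laca's result \cite[Proposition 2.2]{mL93}, using throughout that $C_b$ itself satisfies the intertwining relation \eqref{eq:Intertwine V} (the case $m=\mathbf 1$ of Theorem~\ref{Thm: Intertwine}), so that $\pi(\beta(a))C_b = C_b\pi(a)$ for every $a\in L^\infty(\T)$.

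For the forward direction I would first invoke Theorem~\ref{Thm:Solution 1} to write $S_i^{(k)}=\pi(m_i^{(k)})C_b$, where $\{m_i^{(1)}\}_{i=1}^N$ and $\{m_j^{(2)}\}_{j=1}^N$ are orthonormal bases of the Hilbert module $\LiL$. Since any two orthonormal bases of a Hilbert $C^*$-module are linked by a unitary matrix over the (here commutative) coefficient algebra, and both bases have $N$ elements, setting $u_{ij}:=\<m_i^{(1)},m_j^{(2)}\>=\Lb(\overline{m_i^{(1)}}\,m_j^{(2)})$ produces a unitary matrix $(u_{ij})\in M_N(L^\infty(\T))$ with $m_j^{(2)}=\sum_i m_i^{(1)}\cdot u_{ij}=\sum_i m_i^{(1)}\beta(u_{ij})$. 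The key computation is then
$$S_j^{(2)}=\pi\Big(\sum_i m_i^{(1)}\beta(u_{ij})\Big)C_b=\sum_i \pi(m_i^{(1)})\pi(\beta(u_{ij}))C_b=\sum_i \pi(m_i^{(1)})C_b\pi(u_{ij})=\sum_i S_i^{(1)}\pi(u_{ij}),$$
where the third equality is the intertwining property of $C_b$; this is \eqref{eq:Uniqueness S}.

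For the converse, suppose $S^{(1)},S^{(2)}$ are Cuntz families linked by \eqref{eq:Uniqueness S}. Multiplying on the left by $(S_k^{(1)})^*$ and using \eqref{eq:Cuntz 1} for $S^{(1)}$ gives $\pi(u_{kj})=(S_k^{(1)})^*S_j^{(2)}$; from this together with the Cuntz relations for both families one checks, since $\pi$ is faithful, that $(u_{ij})$ is unitary over $L^\infty(\T)$, so the relation inverts to $S_i^{(1)}=\sum_j S_j^{(2)}\pi(\overline{u_{ij}})$ and the hypotheses are symmetric in the two families. Next I would record that $\alpha_S$ implements $\beta$ precisely when each member of $S$ satisfies \eqref{eq:Intertwine V} (multiply \eqref{eq:Cuntz1a} on the right by $S_j$ and use \eqref{eq:Cuntz 1} for one direction; use $\sum_i S_iS_i^*=I$ for the other). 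If $\alpha_{S^{(1)}}$ implements $\beta$, Theorem~\ref{Thm: Intertwine} gives $S_i^{(1)}=\pi(m_i^{(1)})C_b$, whence $S_j^{(2)}=\pi(\sum_i m_i^{(1)}\beta(u_{ij}))C_b$ is again of the form $\pi(m)C_b$; Theorem~\ref{Thm: Intertwine} then shows each $S_j^{(2)}$ satisfies \eqref{eq:Intertwine V}, so $\alpha_{S^{(2)}}$ implements $\beta$, and the reverse implication follows by symmetry.

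Finally, for the equality statement I would combine Laca's result with uniqueness of the coefficients. By \cite[Proposition 2.2]{mL93}, $\alpha_{S^{(1)}}=\alpha_{S^{(2)}}$ if and only if the two families are linked by a unitary matrix of \emph{scalars}, say $S_j^{(2)}=\sum_i w_{ij}S_i^{(1)}$. But the coefficients in any relation $S_j^{(2)}=\sum_i S_i^{(1)}\pi(u_{ij})$ are uniquely determined, as $\pi(u_{ij})=(S_i^{(1)})^*S_j^{(2)}$ by \eqref{eq:Cuntz 1} and $\pi$ is faithful; comparing with the scalar relation gives $\pi(u_{ij})=w_{ij}I$, forcing each $u_{ij}$ to be the constant $w_{ij}$. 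Conversely, if every $u_{ij}$ is constant, then $\pi(u_{ij})=u_{ij}I$ and \eqref{eq:Uniqueness S} exhibits the two families as linked by a scalar unitary matrix, so $\alpha_{S^{(1)}}=\alpha_{S^{(2)}}$ by Laca. I expect the only point genuinely needing care is the bookkeeping that converts the module change-of-basis formula (involving $\beta(u_{ij})$) into the operator identity \eqref{eq:Uniqueness S} (involving $\pi(u_{ij})$ on the right); this is exactly where the intertwining property of $C_b$ does the work, and it is what guarantees the linking matrix is the same in the module picture and in Laca's operator picture.
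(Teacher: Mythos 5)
Your proposal is correct and follows the paper's own proof essentially verbatim: Theorem~\ref{Thm:Solution 1} to realize both families as $\pi(m_i^{(k)})C_b$ over orthonormal bases of $\LiL$, the Hilbert-module change-of-basis unitary converted into \eqref{eq:Uniqueness S} via the intertwining relation $\pi(\beta(u_{ij}))C_b=C_b\pi(u_{ij})$, and Laca's \cite[Proposition 2.2]{mL93} for the final assertion. Your explicit verifications of the converse (symmetry via unitarity of $(u_{ij})$, extracted from the Cuntz relations) and of the coefficient uniqueness $\pi(u_{ij})=(S_i^{(1)})^*S_j^{(2)}$ are correct fillings-in of details the paper compresses into its closing sentence.
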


\begin{proof}By Theorem \ref{Thm:Solution 1}, we may suppose there
are orthonormal bases $\{m_{i}^{(1)}\}_{i=1}^{N}$ and $\{m_{i}^{(2)}\}_{i=1}^{N}$
for $\LiL$ that define $S^{(1)}$ and $S^{(2)}$. In this event,
there is a unitary matrix $(u_{ij})$ in $M_{N}(L^{\infty}(\T))$
so that \[
m_{j}^{(2)}=\sum_{i=1}^{N}m_{i}^{(1)}\cdot u_{ij}.\]
 But then we may use \eqref{eq:DefS_i} to derive \eqref{eq:Uniqueness S} as follows:
\begin{align*}
S_{j}^{(2)} & =  \pi(m_{j}^{(2)})C_{b}  = \sum_{i=1}^{N}\pi(m_{i}^{(1)})\pi(\beta(u_{ij}))C_{b}
=  \sum_{i=1}^{N}\pi(m_{i}^{(1)})C_{b}\pi(u_{ij})\\
& = \sum_{i=1}^{N}S_{j}^{(1)}\pi(u_{ij}).
\end{align*}
The same equation proves the converse assertion and the last assertion
follows from Laca's Proposition 2.2 in \cite{mL93}.\end{proof}

We conclude with a new look at Rochberg's \cite[Theorem 1]{rR73} and related
work of McDonald \cite{jMcD03}. Because of the complex conjugates
that appear in the formula for the inner product on $\LiL$, it is
somewhat surprising that $\< m_{i},f\>\in H^{\infty}(\T)$
whenever $f\in H^{\infty}(\T)$ and $m_{i}$ comes from an orthonormal
basis for $H^{2}(\T)\ominus\pi(b)H^{2}(\T)$.
\begin{theorem} \label{Thm:Rochberg's Theorem 1}Let $\{v_{i}\}_{i=1}^{N}$
be an orthonormal basis for $\mathcal{D} = H^{2}(\T)\ominus\pi(b)H^{2}(\T)$ and
let $m_{i}=v_{i}J^{-\frac{1}{2}}$, so that $\{m_{i}\}_{i=1}^{N}$
is an orthonormal basis for $\LiL$ by Corollary \ref{Cor: module basis}.
Then a function $f\in L^{\infty}(\T)$ lies in $H^{\infty}(\T)$ if
and only if $\< m_{i},f\>$ lies in $H^{\infty}(\T)$ for
all $i$. Further, $f$ lies in the disc algebra $A(\mathbb{D})$
if and only if $\< m_{i},f\>$ lies in $A(\mathbb{D})$
for all $i$. \end{theorem}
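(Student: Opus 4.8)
The plan is to prove the two equivalences by relating the module inner products $\langle m_i, f\rangle$ back to the operator $C_b$ and the orthonormal basis $\{v_i\}$ of $\mathcal{D}$, exploiting that $C_b$ is reduced by $H^2(\T)$. First I would record the computation underlying everything. Since $m_i = v_i J^{-1/2}$, the definition \eqref{eq:innerproduct} of the inner product gives $\langle m_i, f\rangle = \mathcal{L}(\overline{m_i} f) = \mathcal{L}(\overline{v_i} J^{-1/2} f)$, and by the formula $C_b^* = \mathcal{L}\pi(J^{-1/2})$ from Lemma~\ref{lemma:Cbiso} this equals $C_b^* \pi(\overline{v_i})(f) = C_b^*(\overline{v_i} f)$. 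Thus the whole theorem is a statement about when the vectors $C_b^*(\overline{v_i} f)$ lie in $H^2(\T)$ (or in $A(\mathbb{D})$), which is exactly the kind of $H^2$-membership question that Corollary~\ref{cor: boundedness gamma-b} and Proposition~\ref{Reducing and implementing L} are built to answer.

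The forward direction is the easy half. If $f \in H^\infty(\T)$, then $\overline{v_i} f$ need not lie in $H^2(\T)$, but this is where I would use that $H^2(\T)$ \emph{reduces} $C_b$, so that $C_b^*$ maps $H^2(\T)$ into $H^2(\T)$; the subtlety is that $\overline{v_i}$ is anti-analytic, so I cannot simply invoke reduction. The cleaner route is to reassemble the Fourier expansion: by Corollary~\ref{Cor: module basis} the $\{m_i\}$ form an orthonormal basis of $\LiL$, so $f = \sum_i m_i \cdot \langle m_i, f\rangle = \sum_i m_i \,\beta(\langle m_i, f\rangle)$. If each $\langle m_i, f\rangle \in H^\infty(\T)$, then each $\beta(\langle m_i, f\rangle) = \langle m_i, f\rangle \circ b \in H^\infty(\T)$ (composition with the analytic $b$ preserves analyticity), and since each $m_i = v_i J^{-1/2}$ is a product of the $H^2$ function $v_i$ with the $H^\infty$ function $J^{-1/2}$, the whole sum lies in $H^\infty(\T)$, giving $f \in H^\infty(\T)$. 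This same reconstruction formula handles both directions of the ``if'' cleanly, so I would organize the argument around it.

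For the ``only if'' direction I would show directly that $f \in H^\infty(\T)$ forces $\langle m_i, f\rangle = C_b^*(\overline{v_i} f) \in H^\infty(\T)$. Here I expect the main obstacle: $\overline{v_i} f$ is a genuine $L^2$ (indeed $L^\infty$) function that is generally \emph{not} in $H^2(\T)$, so reduction of $C_b$ does not apply verbatim. The tool I would reach for is the membership criterion isolated in the proof of Corollary~\ref{cor: boundedness gamma-b}: for $\xi \in L^2(\T)$ with $\xi b \in H^2(\T)$, one has $\Gamma_b^* \xi \in H^2(\T)$ if and only if $(\xi b)(0) = 0$. Writing $C_b^* = \mathcal{L}\pi(J^{-1/2}) = \Gamma_b^* \pi(J_0)\pi(J^{-1/2}) = \Gamma_b^* \pi(J^{1/2}\,\overline{J^{1/2}}/\overline{J^{1/2}})$—more usefully, $C_b^* = \Gamma_b^* \pi(\overline{J^{1/2}})$—I would set $\xi = \overline{J^{1/2}}\,\overline{v_i} f$ and analyze whether $\Gamma_b^* \xi \in H^2$ and more delicately whether the result is in $H^\infty$. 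The $H^\infty$ (rather than merely $H^2$) conclusion should follow because $\langle m_i, f\rangle$ is a fixed $L^\infty$ function whose analyticity is what is in question, so once I show all its negative Fourier coefficients vanish I may conclude it is in $H^\infty(\T)$.

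Finally, for the disc-algebra statement I would note that $A(\mathbb{D}) = H^\infty(\T) \cap C(\T)$, so it suffices to combine the $H^\infty$ equivalence just proved with a continuity statement. Since $b$ is a finite Blaschke product, $\mathcal{L}$ and $\beta$ both preserve $C(\T)$ (as noted after Definition~\ref{definition: canonical transfer operator} and in the discussion of finite Blaschke products), and the basis elements $v_i$ from Remark~\ref{Canonical Basis} are in $C(\T)$ with $J^{\pm 1/2} \in C(\T)$ as well; so $f \in C(\T)$ forces each $\langle m_i, f\rangle = \mathcal{L}(\overline{m_i} f) \in C(\T)$, and conversely the reconstruction formula $f = \sum_i m_i \beta(\langle m_i, f\rangle)$ expresses $f$ as a finite sum of products of continuous functions whenever each $\langle m_i, f\rangle \in C(\T)$. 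Intersecting the $H^\infty$ equivalence with this $C(\T)$ equivalence yields the $A(\mathbb{D})$ statement. The step I expect to require the most care is pinning down the ``only if'' direction in the $H^\infty$ case, i.e.\ verifying that $C_b^*(\overline{v_i} f)$ is genuinely analytic when $f$ is, since this is precisely the surprising phenomenon the theorem highlights and it is where the anti-analytic factor $\overline{v_i}$ must be shown to do no harm.
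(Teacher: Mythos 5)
Your overall route coincides with the paper's: the easy implications via the expansion $f=\sum_{i=1}^{N}m_{i}\cdot\< m_{i},f\>$, the hard implication by showing that the negative Fourier coefficients of the (a priori $L^{\infty}$) function $\< m_{i},f\>$ vanish, and the disc-algebra case by intersecting with the fact that $\Lb$ preserves $C(\T)$. But the identity on which you hang the hard implication is false: since $\overline{m_{i}}=\overline{v_{i}}\,\overline{J^{-1/2}}$ and $J^{-1/2}$ is a nonconstant analytic function, hence not real-valued on $\T$, you cannot rewrite $\Lb(\overline{m_{i}}f)$ as $\Lb(J^{-1/2}\overline{v_{i}}f)=C_{b}^{*}(\overline{v_{i}}f)$. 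The correct reduction, using $\Lb=\Gamma_{b}^{*}\pi(J_{0})$ (Theorem~\ref{Theorem: transfer}) and $J_{0}=J^{1/2}\,\overline{J^{1/2}}$ on $\T$, is $\< m_{i},f\>=\Gamma_{b}^{*}\bigl(J^{1/2}\,\overline{v_{i}}f\bigr)$. This is not a cosmetic slip: your plan is to feed $\xi=\overline{J^{1/2}}\,\overline{v_{i}}f$ into the criterion from the proof of Corollary~\ref{cor: boundedness gamma-b} (if $\xi b\in H^{2}(\T)$, then $\Gamma_{b}^{*}\xi\in H^{2}(\T)$ if and only if $(\xi b)(0)=0$), but for your $\xi$ the product $\xi b=\overline{J^{1/2}}f\,(b\overline{v_{i}})$ carries the anti-analytic factor $\overline{J^{1/2}}$ and is in general not in $H^{2}(\T)$, so the criterion never engages --- and in any case $\Gamma_{b}^{*}\xi$ would then be the wrong function.

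With the corrected $\xi=J^{1/2}\,\overline{v_{i}}f$ your plan does go through, but only after you supply the one structural fact your proposal leaves open, namely why ``$\overline{v_{i}}$ does no harm'': since $v_{i}\perp\pi(b)H^{2}(\T)$, the function $\overline{b}v_{i}$ is orthogonal to $H^{2}(\T)$, so $b\overline{v_{i}}\in zH^{2}(\T)$; hence $\xi b=J^{1/2}f\,(b\overline{v_{i}})\in H^{2}(\T)$ with $(\xi b)(0)=0$, the criterion yields $\Gamma_{b}^{*}\xi\in H^{2}(\T)$, and since $\< m_{i},f\>=\Lb(\overline{m_{i}}f)\in L^{\infty}(\T)$ you conclude $\< m_{i},f\>\in H^{\infty}(\T)$. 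This is exactly the content of the paper's direct computation $(\< v_{i}J^{-1/2},f\>,e_{-k})=(J^{1/2}fb^{k},v_{i})=0$ for $k>0$, where pairing against $v_{i}\in\mathcal{D}$ is what neutralizes the conjugates; you correctly flagged this as the crux but gave neither the right $\xi$ nor the orthogonality fact needed to discharge it. Two smaller points: in the forward direction your ``$H^{2}$ times $H^{\infty}$'' observation only puts each summand of $\sum_{i}m_{i}\beta(\< m_{i},f\>)$ in $H^{2}(\T)$, so either invoke Remark~\ref{Canonical Basis} (all elements of $\mathcal{D}$ are rational with poles off the closed disc, so $m_{i}\in A(\D)$), as the paper does, or note that $f\in L^{\infty}(\T)$ by hypothesis and $H^{2}(\T)\cap L^{\infty}(\T)=H^{\infty}(\T)$; and that same remark is what justifies $m_{i}\in C(\T)$ in your disc-algebra step for an \emph{arbitrary} (not just the canonical) orthonormal basis $\{v_{i}\}$.
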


\begin{proof} 
By Remark~\ref{Canonical Basis} we know the functions $m_i$ are in the disc algebra.  It is thus immediate from
\begin{equation}\label{expansion}
f = \sum_{i=1}^n m_i \cdot \<m_i, f\>, \qquad f \in L^{\infty}(\T),
\end{equation}
and the fact that $\beta$ preserves both $H^{\infty}(\T)$ and $A(\D)$ that if the coefficients $\<m_i, f\>$ all lie in $H^{\infty}(\T)$ or $A(\D)$ then $f$ will also.  

Conversely, fix $f \in H^{\infty}(\T)$ and any $v \in \mathcal{D}$.  We must show that $\<v J^{-1/2}, f\>$ is in $H^{\infty}(\T)$.  Note that $\<v J^{-1/2}, f\>$ is in $L^{\infty}$ so it suffices to show that this function is in $H^2(\T)$.  To this end, fix a positive integer $k$, and compute
\begin{align*}
(\<v J^{-1/2}, f\>, e_{-k}) & = (\mathcal{L}(\overline{v J^{-1/2}} f), e_{-k}) \\
& = (\Gamma_b^* (J_0 \overline{v J^{-1/2}} f), e_{-k}) & \text{by Theorem~\ref{Theorem: transfer}}\\
& = (J_0 \overline{v J^{-1/2}} f, b^{-k}) \\
& = (J^{1/2} \overline{v} f, b^{-k}) & \text{as $J_0 = |J| = J^{1/2} \overline{J^{1/2}}$}\\
& = (J^{1/2} f b^k, v).
\end{align*}
Since $J^{1/2} f \in H^{\infty}$ and $k > 0$ the function $J^{1/2} f b^k$ is in $\pi(b) H^2(\T)$, so as $v \in \mathcal{D}$ we conclude $(J^{1/2} f b^k, v) = 0$.  As $k > 0$ was arbitrary, $\<v J^{-1/2}, f\>$ is in $H^2(\T)$, as desired.  If $f$ is further assumed to be in $A(\D)$, as $\Lb$ maps $C(\T)$ into itself we conclude $\<v J^{-1/2}, f\> \in C(\T) \cap H^2(\T) = A(\D)$.
\end{proof}
In our notation, Rochberg's Theorem 1 in \cite{rR73} asserts that if $\{v_i\}_{i=1}^N$ is the \emph{canonical} orthonormal basis for $\mathcal{\D}$, then for any $f \in A(\D)$, there are uniquely determined $f_{1},f_{2},\cdots,f_{N}\in A(\mathbb{D})$ satisfying
\begin{equation}
f(z)=\sum_{i=1}^{N}v_{i}(z) \beta(f_i)(z),\label{eq:Rogberg's Eq 5A} \qquad z \in \overline{\D},
\end{equation}
and that moreover for each $1 \leq i \leq N$ the linear map $f\to f_{i}$ thus determined on $A(\D)$ is continuous in the norm of $A(\D)$.  

We recover this theorem by applying Theorem~\ref{Thm:Rochberg's Theorem 1} to the canonical basis $\{v_i\}_{i=1}^N$ and the function $J^{-1/2} f \in A(\D)$: it asserts that \eqref{eq:Rogberg's Eq 5A} holds with the functions $f_i = \<m_i, J^{-1/2} f\> \in A(\D)$.  The norm continuity of the $f_i$ in $f$ is immediate from this formula.  Our Theorem~\ref{Thm:Rochberg's Theorem 1} provides a slightly stronger uniqueness statement: if $f \in A(\D)$, assuming only that the $f_i$ are in $L^{\infty}(\T)$, multiplying both sides of \eqref{eq:Rogberg's Eq 5A} by $J^{-1/2}$, applying $\<m_j, -\>$, and using the fact that $\{m_i\}_{i=1}^N$ is an orthonormal basis for $\LiL$, one finds that $f_j$ must be given by the formula above.

Rochberg \cite{rR73} and McDonald \cite{jMcD03} establish more information about the $f_{i}$ using the special
structure of the canonical orthonormal basis of $\mathcal{\D}$.  Our analysis does not seem to contribute anything new to their refinements.  On the other hand, our results are explicitly independent of the choice of basis and connect to the structure of the Hilbert module $\LiL$. 

\begin{remark}\label{remark: Concluding Remark}The reader may have
noticed that if $m\in L^{\infty}(\T)$ and if $T=\pi(m)C_{b}$, then
from the calculations in Theorem \ref{Thm: Intertwine}, the norm
of $T$ is the norm of $m$ calculated in $\LiL$. This is not an
accident. The Hilbert module $\LiL$ becomes a \emph{left }module
over $L^{\infty}(\T)$ via the formula $a\cdot\xi:=a\xi$, $a\in L^{\infty}(\T)$,
$\xi\in\LiL$. This makes $\LiL$ what is known as a $C^{*}$\emph{-correspondence}
or \emph{Hilbert bimodule} over $L^{\infty}(\T)$. Further, if $\psi:\LiL\to B(L^{2}(\T)$
is defined by the formula\[
\psi(m)=\pi(m)C_{b},\qquad m\in\LiL,\]
then the pair $(\pi,\psi)$ turns out to be what is known as a \emph{Cuntz-Pimsner
covariant representation} of the pair $(L^{\infty},\LiL)$. This means,
in particular, that $\psi(m)^{*}\psi(m)=\pi(\< m,m\>)$,
as we noted in Theorem \ref{Thm: Intertwine}.
Further, the pair $(\pi,\psi)$ extends to a $C^{*}$-representation
of the so-called \emph{Cuntz-Pimsner algebra} of $\LiL$, $\mathcal{O}(\LiL)$.
We have not made use of this here, but it strikes us as worthy
of further investigation. See \cite{FMR03} for further information
about Cuntz-Pimsner algebras and their representations.\end{remark}

\end{document}